\newcommand{\real}{\mathbb{R}}
\newcommand{\nat}{\mathbb{N}}
\newcommand{\rn}{\real^N}
\newcommand{\intrn}{\int_{\real^N}}
\newcommand{\N}{\mathbb{N}}
\newcommand{\R}{\mathbb{R}}
\newcommand{\ffi}{\varphi}
\newcommand{\D}{\Delta}
\newcommand{\lam}{\lambda}
\newcommand{\wto}{\rightharpoonup}
\newcommand{\wt}{\widetilde}
\newcommand{\diff}{\,\mathrm{d}}
\newcommand{\sm}{\setminus}
\newcommand{\x}{\times}
\let\le=\leqslant
\let\ge= \geqslant
\newcommand{\disp}{\displaystyle}
\newcommand\txt{\textstyle}
\DeclareMathOperator \im{Im}
\DeclareMathOperator \re{Re}
\DeclareMathOperator \dist{dist}
\DeclareMathOperator \supp{supp}
\theoremstyle{plain}
\newtheorem{theorem}{Theorem}
\newtheorem{lemma}{Lemma}
\newtheorem{proposition}{Proposition}
\newtheorem{corollary}{Corollary}
\theoremstyle{definition}
\newtheorem{definition}{Definition}
\newtheorem{remark}{Remark}
\numberwithin{equation}{section}
\newenvironment{proofof}
{\medskip\noindent{\it Proof of}}{\nolinebreak\hfill$\Box$\medskip}
\begin{document}

\title[Blow-up solutions of NLS with inverse-square potential]
{Minimal mass blow-up solutions
for the $L^2$ critical NLS with inverse-square potential}

\author[E. Csobo]{Elek Csobo}
\address{Delft Institute of Applied Mathematics \\
Delft University of Technology\\
Mekelweg~4\\
2628~CD Delft, The Netherlands}
\email{E.Csobo@tudelft.nl}

\author[F. Genoud]{Fran\c cois Genoud}
\address{Delft Institute of Applied Mathematics \\
Delft University of Technology\\
Mekelweg~4\\
2628~CD Delft, The Netherlands}
\email{S.F.Genoud@tudelft.nl}

\subjclass[2010]{35Q55 ; 35B44 ; 35C06}

\keywords{$L^2$ critical NLS, inverse-square potential, sharp global well-posedness, 
finite time blow-up, classification}

\thanks{The authors are grateful to Dorothee Frey, Mark Veraar and Nikolaos Zographopoulos  
for helpful discussions.}


\begin{abstract}
We study minimal mass blow-up solutions of the focusing $L^2$
critical nonlinear Schr\"odinger equation with inverse-square potential,
\[
i\partial_t u + \Delta u + \frac{c}{|x|^2}u+|u|^{\frac{4}{N}}u = 0,
\]
with $N\ge 3$ and $0<c<\frac{(N-2)^2}{4}$. We first prove a sharp global well-posedness 
result: all $H^1$ solutions with a mass ({\em i.e.}~$L^2$ norm) strictly below that of the ground states
are global. Note that, unlike the equation in free space, we do not know if the ground state
is unique in the presence of the inverse-square potential. Nevertheless, all ground states
have the same, minimal, mass. We then construct and classify finite time blow-up solutions at the 
minimal mass threshold. Up to the symmetries of the equation, every such solution is a 
pseudo-conformal transformation of a ground state solution.
\end{abstract}

\maketitle


\section{Introduction}

In this paper, we initiate the study of finite time blow-up solutions of the
focusing nonlinear Schr\"odinger equation (NLS) with an attractive 
inverse-square potential,
\begin{align}\label{inls}
i\partial_t u + \Delta u + \frac{c}{|x|^2}u+|u|^{p-1}u &= 0, \quad
u(0,\cdot)=u_0\in H^1(\rn),
\end{align}
in the $L^2$ critical case, $p=1+\frac{4}{N}$, with $N\ge 3$. 
We shall fix the coupling constant 
$c\in(0,c_*)$, where $c_*=(N-2)^2/4$ is the best constant in Hardy's inequality:
\begin{equation}\label{hardy}
c_*\intrn \frac{|u|^2}{|x|^2}dx \le \intrn |\nabla u|^2 dx, 
\quad u\in H^1(\rn).
\end{equation}

The NLS equation with inverse-square potential has received substantial 
attention recently, see {\em e.g.}~\cite{visan0,visan1,visan2,suzuki,zhang_zheng} for
various results of local/global well-posedness, scattering, and harmonic analysis issues 
related to the operator $-\Delta-c|x|^{-2}$. 
All these recent contributions rely on the Strichartz estimates for this operator, which
were established by Burq {\em et al.}~in \cite{burq}.
A scattering/blow-up dichotomy result \`a la Duyckaerts--Holmer--Roudenko \cite{DHR,HR} was
proved by Killip~{\em et al.}~in \cite{visan2} for the cubic nonlinearity in dimension $N=3$, and 
recently extended by Lu~{\em et al.}~\cite{lu} to all $L^2$ supercritical, energy subcritical
nonlinearities, in dimensions $3\le N \le 6$. To the best of our knowledge, 
apart from these contributions,
blow-up solutions of \eqref{inls} are mostly virgin territory.

The present work is a first step in this direction, and we shall focus here on the 
$L^2$ critical power $p=1+\frac{4}{N}$, which is the smallest power for which finite time blow-up
occurs. $L^2$ criticality (discussed in more detail below) follows from the fact that the
potential $|x|^{-2}$ is homogeneous of degree $-2$, like the Laplace operator. 
On the other hand, the presence of a space-dependent coefficient in \eqref{inls}
breaks the translation invariance, which is a fundamental feature of the classical NLS 
({\em i.e.}~the case $c=0$). 
Mathematically, the inverse-square potential, with its remarkable scaling property, 
yields a fairly tractable instance of NLS without translation invariance. 
It also plays an important role in various areas of physics, for instance in
quantum field equations, or in the study of certain black hole solutions of the Einstein
equations; see the references in \cite{kalf,burq}.

Let us now describe the main results of our work, and their relations to the literature.
We consider strong solutions $u=u(t,x)\in C^0_t H^1_x ([0,T)\x\rn)$,
where $T>0$ is the maximum time of existence of $u$.
We will sometimes simply denote by $u(t)\in H^1(\rn)$ the function $x\mapsto u(t,x)$.
Along the flow of~\eqref{inls}, we have conservation of the $L^2$ norm, 
also known as the \emph{mass}:
\begin{equation}\label{mass}
\Vert u(t)\Vert_{L^2_x} \equiv \Vert u_0\Vert_{L^2_x},
\end{equation}
and of the \emph{energy}:
\begin{equation}\label{energy}
E(u(t))=\frac12 \intrn|\nabla u(t,x)|^2\diff x - \frac{c}{2}\int_{\rn}\frac{|u(t,x)|^2}{|x|^2}dx 
- \frac{1}{p+1}\intrn|u(t,x)|^{p+1}\diff x \equiv E(u_0).
\end{equation}
A solution is called global if $T=+\infty$. The local well-posedness
of \eqref{inls} with $c\in(0,c_*)$ is ensured by the following result. It can be proved
using Strichartz estimates as in the case of the wave equation with inverse-square
potential considered by Planchon~{\em et al.}~\cite{planchon}, although another proof is given in
\cite{suzuki}. We will comment further on this point and on the case $c=c_*$
in Subsection~\ref{threshold.sec}.

\begin{theorem}[Theorem~5.1 of \cite{suzuki}]\label{local.thm}
Let $c\in(0,c_*)$ and $1<p<1+\frac{4}{N-2}$.
For any initial value $u_0\in H^1(\rn)$, there exists $T\in(0,+\infty]$ and a maximal solution
$u\in C^0_t H^1_x ([0,T)\x\rn)$ of \eqref{inls},
satisfying \eqref{mass}--\eqref{energy} for all $t\in(0,T)$. Moreover, the blow-up 
alternative holds: if $T<+\infty$ then $\lim_{t\uparrow T} \Vert \nabla u(t)\Vert_{L^2}=+\infty$.
Finally, if $1<p<1+\frac{4}{N}$, then the solution is global.
\end {theorem}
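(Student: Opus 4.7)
The plan is to follow the Strichartz-based strategy of Cazenave's textbook, adapted to the propagator $\e^{-itL_c}$ with $L_c:=-\D-c|x|^{-2}$, relying on the Strichartz estimates of Burq \emph{et al.}~\cite{burq} as substitutes for the classical $c=0$ estimates. Before doing anything else, I would record the operator-theoretic foundation: since $c\in(0,c_*)$, Hardy's inequality~\eqref{hardy} implies that $L_c$ is a non-negative self-adjoint realization on $L^2(\rn)$ whose quadratic form domain is $H^1(\rn)$, and that the $H^1$-norm is equivalent to $(\Ve u\Ve_{L^2}^2+\Ve\sqrt{L_c}u\Ve_{L^2}^2)^{1/2}$. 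This equivalence, which degenerates precisely at $c=c_*$, is the reason the entire $H^1$ machinery can be transplanted from the free case.

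Next, I would run a contraction mapping argument on the Duhamel formulation
\begin{equation*}
u(t)=\e^{-itL_c}u_0-i\int_0^t \e^{-i(t-s)L_c}\bigl(|u|^{p-1}u\bigr)(s)\diff s
\end{equation*}
in a ball of the Strichartz space $C^0([0,T];H^1)\cap L^q([0,T];W^{1,r})$, for a suitable Schr\"odinger-admissible pair $(q,r)$. The linear and inhomogeneous Strichartz bounds for $\e^{-itL_c}$ from \cite{burq}, combined with the norm equivalence above (which lets me commute $\sqrt{L_c}$ with the Strichartz estimate and then trade it back for $\nabla$), reduce the nonlinear step to a fractional chain rule estimate on $|u|^{p-1}u$. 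The assumption $p<1+\tfrac{4}{N-2}$ enters through the Sobolev embedding $H^1\hookrightarrow L^{p+1}$, which is exactly what is needed to close the estimate on a short interval whose length depends only on $\Ve u_0\Ve_{H^1}$.

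Having obtained a local solution, I would extract a maximal one by the usual gluing argument and deduce the blow-up alternative: if $\limsup_{t\uparrow T}\Ve\nabla u(t)\Ve_{L^2}<\infty$, then the local existence time near $T$ is bounded below by a uniform constant, contradicting maximality. Mass and energy conservation are established by first checking them for smooth, compactly supported (away from the origin) approximate data, for which the calculation is the standard multiplication by $\bar u$ and $\partial_t\bar u$; one then passes to the limit using continuous dependence in the Strichartz norm and the norm equivalence for $L_c$.

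Finally, for $1<p<1+\tfrac{4}{N}$, I would combine the conserved energy~\eqref{energy} with the Gagliardo--Nirenberg inequality
\begin{equation*}
\intrn|u|^{p+1}\diff x\le C\,\Ve\nabla u\Ve_{L^2}^{\frac{N(p-1)}{2}}\Ve u\Ve_{L^2}^{p+1-\frac{N(p-1)}{2}},
\end{equation*}
where the exponent of $\Ve\nabla u\Ve_{L^2}$ is strictly less than $2$ in the subcritical regime. Absorbing the nonlinear term (and controlling the Hardy term via~\eqref{hardy} with a factor $c/c_*<1$) yields an a priori bound on $\Ve\nabla u(t)\Ve_{L^2}$ in terms of the conserved mass and energy, and the blow-up alternative then forces $T=+\infty$.

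The main obstacle I expect is the rigorous justification of the conservation laws at the $H^1$ regularity level: a priori, $u(t)$ need not lie in the domain of $L_c$, so the formal manipulations must be validated through a regularization that is simultaneously compatible with the Strichartz theory and with the singular potential. Once that density argument is in place—and the Strichartz estimates of \cite{burq} are taken as a black box—the remainder is a direct transcription of the classical $L^2$-subcritical NLS theory.
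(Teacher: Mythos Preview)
The paper does not prove this theorem; it is quoted from \cite{suzuki}. The authors do, however, comment on the proof (see the paragraph preceding the statement and Subsection~\ref{threshold.sec}): \cite{suzuki} obtains existence by adapting Cazenave's abstract energy method \cite[Theorem~3.3.5]{cazenave}, which does \emph{not} use dispersive estimates, and then invokes the Strichartz estimates of \cite{burq} only for the uniqueness part. The authors also remark that a fully Strichartz-based proof, \`a la \cite{planchon}, is possible. Your proposal is precisely this second route, so you are not following the cited reference but rather the alternative the paper mentions in passing.

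One step in your sketch needs more than you have supplied. The norm equivalence you record --- between $\Ve\nabla u\Ve_{L^2}$ and $\Ve\sqrt{L_c}\,u\Ve_{L^2}$ --- is the $L^2$-based statement~\eqref{hardy2}. But to run the contraction in $L^q_tW^{1,r}_x$ you must trade $\Ve\sqrt{L_c}\,u\Ve_{L^r}$ for $\Ve\nabla u\Ve_{L^r}$ with $r\neq2$, and that is \emph{not} a consequence of Hardy's inequality; it is a separate harmonic-analysis fact (equivalence of the Sobolev spaces adapted to $L_c$ with the standard ones, valid only for a range of $r$ depending on $c$), established for instance in \cite{visan0}. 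Without it, your phrase ``commute $\sqrt{L_c}$ with the Strichartz estimate and then trade it back for $\nabla$'' is unjustified. The gap is fillable from the literature, but it is a genuine missing ingredient --- and arguably one reason \cite{suzuki} opted for the energy method for the existence step.
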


The constants of the motion \eqref{mass}--\eqref{energy} are related 
to the symmetries of~\eqref{inls} in $H^1(\rn)$.
More precisely, if $u(t,x)$ solves~\eqref{inls}, then so do:
\begin{enumerate}[label=(\alph*)]
\item $u_{t_0}(t,x)=u(t-t_0,x)$, for all $t_0\in\R$ (time translation invariance);
\item $u_{\gamma_0}(t,x)=e^{i\gamma_0}u(t,x)$, for all $\gamma_0\in\R$ (phase invariance);
\item $u_{\lambda_0}(t,x)=\lambda_0^{2/(p-1)}u(\lambda_0^2t,\lambda_0x)$, 
for all $\lambda_0>0$ (scaling invariance).
\end{enumerate}
Note that \eqref{inls} with $c>0$ is not invariant under space translations and Galilean transformations.
The symmetries (a) and (b) are obvious and give rise, via Noether's theorem,
to the invariance of the energy and the mass, respectively. The scaling invariance is described 
in more detail below.

For our analysis it is convenient to introduce the {\em Hardy functional}, 
defined on $H^1(\rn)$ by
\[
H(u)=\int_{\rn} |\nabla u|^2dx -c\int_{\rn}\frac{|u|^2}{|x|^2}dx.
\]
Using $H$, the energy can be rewritten as
\[
E(u)=\frac12 H(u) - \frac{1}{p+1}\Vert u\Vert_{L^{p+1}}^{p+1}.
\]
Moreover by Hardy's inequality, for all $c\in (0,c_*]$,
\begin{equation}\label{hardy2}
\left(1-\frac{c}{c_*}\right) \int_{\rn} |\nabla u|^2dx\le H(u)\le \int_{\rn} |\nabla u|^2dx.
\end{equation}
In particular, for $c\in(0,c_*)$, $H(u)$ defines on $H^1(\rn)$ a seminorm equivalent to 
$\Vert\nabla u\Vert_{L^2}$. A solution $u(t)$ therefore blows up at time $T>0$ if and only if
$\lim_{t\uparrow T} H(u(t))=+\infty$. Furthermore, $H(u)$ scales as $\Vert\nabla u\Vert_{L^2}^2$ 
under space dilations. More precisely, the self-adjoint operator $-\Delta-c|x|^{-2}$ 
associated with the positive semi-definite quadratic form $H(u)$ is homogeneous 
of degree $-2$.\footnote{Note that the self-adjoint operator associated with 
$H(u)$ is unique when $c\le c_*-1$, as $-\Delta-c|x|^{-2}$ is essentially self-adjoint 
on $C_0^\infty(\rn\sm\{0\})$ in this case. For $c_*-1<c\le c_*$, this operator 
has deficiency indices $(1,1)$ and so admits a one-parameter
family of self-adjoint extensions in $L^2(\rn)$. See \cite{visan0,kalf} for more details.} 
The scaling symmetry (c) above is a crucial consequence of this fact. Now, 
$p=1+\frac{4}{N}$ yields $2/(p-1)=N/2$ and, as in the classical case $c=0$, \eqref{inls} is
invariant under the {\em $L^2$ scaling}
\[
u(t,x) \to u_\lambda(t,x)=\lambda^{N/2}u(\lambda^2 t, \lambda x) \quad (\lambda>0).
\]
This transformation preserves the $L^2$ norm and \eqref{inls} is called {\em $L^2$ critical}.

An important feature of~\eqref{inls} is the existence of standing wave solutions.
Indeed, $u(t,x)=e^{it}\ffi(x)$ is a (global) solution of~\eqref{inls} if and only if
$\ffi\in H^1(\rn)$ solves the nonlinear elliptic equation
\begin{equation}\label{sinls}
\Delta\ffi + \frac{c}{|x|^2}\ffi - \ffi + |\ffi|^{\frac{4}{N}}\ffi=0.
\end{equation}
In Section~\ref{global.sec}, we will use Weinstein's variational approach \cite{w82} to 
prove the existence in $H^1(\rn)$ of a positive radial solution $Q$ of~\eqref{sinls}, 
called {\em ground state}. Ground states will be defined as positive radial solutions of \eqref{sinls}
which minimize a suitable functional, see Proposition~\ref{minimization.prop}.
We shall see that all ground states $Q$ have the same mass 
$\Vert Q\Vert_{L^2}=:M_\mathrm{gs}>0$ and satisfy $E(Q)=0$.
We denote the set of ground states by $\mathcal{G}$.

For $c=0$, it is well known that the ground state is unique, up to the symmetries of \eqref{sinls}. 
More precisely, there exists a unique positive radial solution $Q\in H^1(\rn)$ of \eqref{sinls}, and
$\mathcal{G}=\{e^{i\gamma_0}Q(\cdot - x_0) : \gamma_0\in\real, \ x_0\in\rn\}$.
Unfortunately, for $c>0$, we are not aware of any uniqueness result for \eqref{sinls} on $\rn$. 
Uniqueness results for radial solutions of nonlinear elliptic PDEs are typically based 
on an intricate analysis of the corresponding ODEs in the radial variable $r=|x|$, 
see {\it e.g.}~\cite{kwong,coffman,peletier,yanagida}. We shall not consider this problem here.

Our first result shows that ground states play a pivotal role in the global dynamics of~\eqref{inls}. 

\begin{theorem}\label{global.thm}
Let $c\in(0,c_*)$ and $p=1+\frac{4}{N}$.
If $u_0\in H^1(\rn)$ satisfies
\begin{equation}\label{sharp}
\Vert u_0\Vert_{L^2}<M_\mathrm{gs},
\end {equation}
then the corresponding solution of \eqref{inls} given by Theorem~\ref{local.thm} is global.
\end{theorem}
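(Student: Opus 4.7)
The approach I would take is Weinstein's variational argument \cite{w82}, adapted by replacing $\Ve\nabla u\Ve_{L^2}^2$ with the Hardy functional $H(u)$. The proof splits into a genuinely variational step (establishing a sharp Gagliardo--Nirenberg inequality whose optimal constant is attained on $\mathcal{G}$) and a routine part (plugging this into the conservation laws and invoking the blow-up alternative).

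First I would consider on $H^1(\rn)\sm\{0\}$ the Weinstein-type quotient
\[
J(v)=\frac{H(v)\,\Ve v\Ve_{L^2}^{4/N}}{\Ve v\Ve_{L^{p+1}}^{p+1}},
\]
and observe that, by the variational characterization of ground states in Proposition~\ref{minimization.prop}, every $Q\in\mathcal{G}$ minimizes $J$; write $\alpha:=\inf J=J(Q)>0$. This immediately yields the sharp inequality
\begin{equation}\label{gn.plan}
\Ve v\Ve_{L^{p+1}}^{p+1}\le \alpha^{-1}\,\Ve v\Ve_{L^2}^{4/N}\, H(v),\qquad v\in H^1(\rn).
\end{equation}
Next I would compute $\alpha$ explicitly: pairing \eqref{sinls} with $\overline Q$ and integrating gives the Nehari identity $H(Q)+\Ve Q\Ve_{L^2}^2=\Ve Q\Ve_{L^{p+1}}^{p+1}$, while $E(Q)=0$ reads $\Ve Q\Ve_{L^{p+1}}^{p+1}=\tfrac{p+1}{2}H(Q)$. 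Together these force $\Ve Q\Ve_{L^2}^2=\tfrac{2}{N}H(Q)$, hence $H(Q)=\tfrac{N}{2}M_\mathrm{gs}^2$ and $\alpha=\tfrac{2}{p+1}M_\mathrm{gs}^{4/N}$.

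Applying \eqref{gn.plan} to $v=u(t)$ in the definition of the energy and using mass conservation \eqref{mass} to substitute $\Ve u(t)\Ve_{L^2}=\Ve u_0\Ve_{L^2}$, I obtain
\[
E(u_0)=E(u(t))\ge \frac12\left[1-\left(\frac{\Ve u_0\Ve_{L^2}}{M_\mathrm{gs}}\right)^{\!4/N}\right] H(u(t)).
\]
Under hypothesis \eqref{sharp} the bracket is a strictly positive constant, so $H(u(t))$ remains uniformly bounded on the maximal existence interval $[0,T)$. By the norm equivalence \eqref{hardy2} (valid since $c<c_*$), $\Ve\nabla u(t)\Ve_{L^2}$ stays bounded as well, and the blow-up alternative of Theorem~\ref{local.thm} forces $T=+\infty$.

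The only substantive obstacle is the first step, namely proving that $\inf J$ is attained by some $Q\in H^1(\rn)$ solving \eqref{sinls}. This is exactly the content of Proposition~\ref{minimization.prop}, which will be established in Section~\ref{global.sec} via symmetrization and the compact embedding $H^1_\mathrm{rad}(\rn)\hookrightarrow L^{p+1}(\rn)$, together with the coercivity of $H$ supplied by \eqref{hardy2}. The potential non-uniqueness of the ground state is immaterial, since both $\alpha$ and $M_\mathrm{gs}$ depend only on the common variational level and the common mass of the elements of $\mathcal{G}$.
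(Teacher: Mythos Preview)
Your proposal is correct and follows essentially the same route as the paper: establish the sharp Gagliardo--Nirenberg inequality via Proposition~\ref{minimization.prop}, identify the optimal constant $\alpha=\tfrac{2}{p+1}M_\mathrm{gs}^{4/N}=\tfrac{1}{1+2/N}M_\mathrm{gs}^{4/N}$, insert it into the energy together with mass and energy conservation to bound $H(u(t))$, and invoke the blow-up alternative. The only cosmetic difference is that the paper reads off $\alpha$ directly from the Pohozaev relation $E(Q)=0$ (Remark~\ref{gs_scaling.rem}(b)), whereas you combine this with the Nehari identity; both computations give the same constant.
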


The proof of this theorem relies on the inequality
\begin{equation}\label{posE}
E(u)\ge \frac12 H(u)
\left(1-\left(\frac{\Vert u\Vert_{L^2}}{M_\mathrm{gs}}\right)^\frac{4}{N}\right),
\quad u\in H^1(\rn),
\end{equation}
which follows from a sharp Gagliardo--Nirenberg inequality established in Section~\ref{global.sec}.
Indeed, since the $L^2$ norm and the energy are conserved, \eqref{posE} immediately yields
an \emph{a priori} bound on $H(u(t))$ --- and hence on $\Vert\nabla u(t)\Vert_{L^2}$ --- 
in the case $\Vert u_0\Vert_{L^2}<M_\mathrm{gs}$. Namely, we have
\begin{equation} \label{subcritical}
H(u(t)) \le 2E(u_0) \left(1-\left(\frac{\Vert u\Vert_{L^2}}{M_\mathrm{gs}}\right)^\frac{4}{N}\right)^{-1},
\end{equation}
which implies global existence. 

We shall next exhibit blow-up solutions at the mass threshold 
$\Vert u_0\Vert_{L^2}=M_\mathrm{gs}$, which is thus
the {\em minimal mass} where blow-up can occur. This shows that the global
well-posedness condition \eqref{sharp} is sharp.
The minimal mass blow-up solutions are constructed explicitly by applying the pseudo-conformal
transformation (defined in Lemma~\ref{pseudo-conf.lem}) to the standing wave $e^{it}Q$. 
Taking into account the above symmetries of~\eqref{inls} we obtain, for each
ground state $Q\in \mathcal{G}$, a 3-parameter
family $(S_{Q,T,\lambda_0,\gamma_0})_{T\in\R,\lambda_0>0,\gamma_0\in\R}$ of minimal 
mass solutions of~\eqref{inls} blowing up in finite time, defined as
\begin{equation} \label{S1.def}
S_{Q,T,\lambda_0,\gamma_0}(t,x) = e^{i\gamma_0} e^{i\frac{\lambda_0^2}{T-t}} e^{-i\frac{|x|^2}{4(T-t)}}
\left(\frac{\lambda_0}{T-t}\right)^{N/2}Q\left( \frac{\lambda_0x}{T-t} \right).
\end{equation}
Note that these solutions present a self-similar profile, in the sense that, for all $t\in [0,T)$, there
exists $\lam(t)>0$ such that $|S_{Q,T,\lambda_0,\gamma_0}(t,x)|=\lam(t)^{N/2}Q(\lam(t)x)$. Hence,
up to a time-dependent $L^2$ rescaling, $S_{Q,T,\lambda_0,\gamma_0}$ 
keeps the same shape as $Q$ while blowing up.

The striking fact is that all finite time blow-up solutions at the minimal mass threshold
are of this form. Indeed, we have the following classification result.

\begin{theorem} \label{main.thm}
Let $T>0$ and $u\in C\big([0,T),H^1(\rn)\big)$ be a minimal mass solution of~\eqref{inls}
with $p=1+\frac{4}{N}$, which blows up at time $T$, 
\emph{i.e.}~$\Vert u_0\Vert_{L^2}=M_\mathrm{gs}$ and $\lim_{t\uparrow T} H(u(t))=+\infty$.
Then there exist a ground state $Q\in \mathcal{G}$, $\lambda_0>0$ 
and $\gamma_0\in\R$ such that, for all $t\in [0,T)$,
\[
u(t)=S_{Q,T,\lambda_0,\gamma_0}(t).
\]
\end{theorem}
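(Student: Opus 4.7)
The plan is to adapt Merle's 1993 classification of minimal-mass blow-up for the free $L^2$-critical NLS to the present inverse-square setting. A convenient structural simplification is that the potential breaks translation and Galilean invariance: no spatial translation parameter needs to be tracked along the flow and any concentration is forced to occur at the origin, so only the scaling and phase modulation parameters from symmetries~(b) and~(c) survive.

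The first step is a concentration-compactness lemma at the mass threshold: if $(u_n)\subset H^1(\rn)$ satisfies $\Vert u_n\Vert_{L^2}\to M_{\mathrm{gs}}$, $E(u_n)$ stays bounded, and $H(u_n)\to\infty$, then after phase rotation and rescaling by some $\lambda_n$ with $\lambda_n^2\asymp H(u_n)$, a subsequence converges strongly in $H^1(\rn)$ to some $Q\in\mathcal{G}$. This should follow from the sharp Gagliardo--Nirenberg inequality underlying~\eqref{posE}: the hypotheses force saturation in the limit, the variational characterization of ground states (Proposition~\ref{minimization.prop}, proved in Section~\ref{global.sec}) identifies the weak limit as a minimizer, and convergence of the GN ratio upgrades weak to strong convergence.

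Next I would apply the pseudo-conformal transformation of Lemma~\ref{pseudo-conf.lem} to $u$, producing a global solution $v$ of~\eqref{inls}. Since $u$ has minimal mass and blows up at $T$, a direct computation of $E(v)$ in terms of $u$ (using that the variance-type quantity $\int|x|^2|u|^2\diff x$ must vanish as $t\uparrow T$ for a minimal-mass blow-up, by a virial argument applied to the nonnegative function $t\mapsto\int|x-\text{something}|^2|u(t)|^2$) should yield $E(v)\equiv 0$. Choosing $t_n\uparrow T$ and applying the concentration-compactness step to $u(t_n)$ gives, after modulation, convergence to some $Q\in\mathcal{G}$. Transported through the pseudo-conformal map, this translates into $v(s_n)$ converging in $H^1(\rn)$ to a standing-wave profile $e^{i\theta}\mu^{N/2}Q(\mu\cdot)$ along the corresponding times $s_n$. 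Combining continuity of $v(s)$ in $H^1$, conservation of mass and energy with $E(v)=0$, and the fact that $Q$ saturates the GN inequality, one concludes that $v$ lies on the orbit of $\{e^{is}\mu^{N/2}Q(\mu\cdot)\}$ under the symmetry group for every $s$, and inverting the pseudo-conformal transformation gives $u=S_{Q,T,\lambda_0,\gamma_0}$ as claimed.

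The main obstacle I anticipate is the final rigidity step, aggravated by the lack of uniqueness for $Q$: the ground state extracted in the concentration-compactness lemma could a priori depend on the chosen sequence $t_n$, so a set-valued modulation over $\mathcal{G}$ is needed. To single out a single $Q$ valid for every $t\in[0,T)$, I would set up a modulation decomposition of $v(s)$ in a tubular neighborhood of the manifold of symmetry-generated minimizers, derive modulation ODEs for the scale and phase parameters from the Hamiltonian structure of~\eqref{inls}, and use continuity in $s$ to argue that the limiting ground state is locally constant on the connected interval, hence uniquely determined. Implementing this rigidity without an explicit uniqueness statement for~\eqref{sinls} is the most delicate aspect of the argument.
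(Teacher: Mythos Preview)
Your ingredients are essentially those of the paper (concentration-compactness at the threshold, virial, pseudo-conformal), but your assembly is considerably more convoluted than necessary, and one logical gap and one unnecessary difficulty stand out.

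\textbf{Gap: finite variance is not automatic.} You propose to apply the pseudo-conformal transform to $u$ and to run a virial argument, but both steps presuppose $u(t)\in\Sigma$, i.e.\ $xu(t)\in L^2$. Nothing in the hypotheses guarantees this; it has to be \emph{proved} first. The paper (following Hmidi--Keraani) does this via a truncated virial quantity $\Gamma_R(t)=\int\phi_R|u|^2$ together with Banica's Cauchy--Schwarz inequality (Lemma~\ref{banica.lem}), which controls $|\Gamma_R'(t)|$ by $C\sqrt{E(u_0)}\sqrt{\Gamma_R(t)}$ uniformly in $R$. The mass-concentration at the origin (your Step~1, the paper's Steps~1--2) is used precisely here, to show $\Gamma_R(t_n)\to 0$. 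Integrating and letting $R\to\infty$ gives $u(t)\in\Sigma$ and $\Gamma(t)\le C(T-t)^2$. Without this device your plan does not get off the ground.

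\textbf{The rigidity step you fear is not needed.} Once $u(t)\in\Sigma$, the exact virial identity \eqref{virialthree} gives $\Gamma(t)=8E(u_0)(T-t)^2$ together with $\Gamma'(0)=-16E(u_0)T$. Plugging these into the algebraic identity \eqref{valeria} with $\theta=\tfrac{|x|^2}{2}$ and $s=\tfrac{1}{2T}$ yields $E\big(u_0e^{i|x|^2/4T}\big)=0$. Now the variational characterization (Proposition~\ref{charact.prop}) is applied \emph{once}, to the single function $u_0e^{i|x|^2/4T}$, giving $u_0=S_{\widetilde Q,T,\lambda_0,\gamma_0}(0)$ for some $\widetilde Q\in\mathcal G$. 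Uniqueness of the Cauchy problem then forces $u(t)=S_{\widetilde Q,T,\lambda_0,\gamma_0}(t)$ for all $t$. There is no need to track a ground state along a sequence $t_n$, no modulation decomposition, and no issue whatsoever with possible non-uniqueness of $Q$: the ground state is selected by the initial datum, not by a limiting procedure. Your proposed modulation argument, besides being far heavier, would likely require non-degeneracy of the linearized operator at $Q$---information that is unavailable precisely because uniqueness for \eqref{sinls} is open.
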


Blow-up solutions of the $L^2$ critical NLS in the classical case $c=0$ 
have been thoroughly investigated in the past.
A comprehensive review of the theory  
for the classical focusing NLS can be found in~\cite{r}.
Theorems~\ref{global.thm} and \ref{main.thm} respectively
extend the famous results of Weinstein~\cite{w82} and Merle~\cite{merle}, 
from the case $c=0$ to the case $c\in(0,c_*)$. We are indebted to these authors for the
fundamental ideas supporting our proofs. In fact, our approach here is based closely on 
Hmidi and Keraani \cite{keraani}, where the arguments of \cite{merle} have been simplified, 
using a Cauchy--Schwarz inequality due to Banica \cite{banica}.

The paper \cite{keraani} deals with the
classical NLS with $L^2$ critical nonlinearity and constant coefficients. To adapt it to
space-dependent coefficients, the main difficulties lie in a crucial compactness result.
In the present work, this is Proposition~\ref{compact.prop}, which
relies on a subtle combination of Hardy's inequality and the sharp Gagliardo--Nirenberg inequality
\eqref{GN}. 
Various authors have also considered blow-up solutions for focusing NLS equations 
with space-dependent coefficients, notably \cite{rs,bcd,m96}. 
These references are discussed in some detail in the introduction of the paper 
\cite{combet} by Combet and the second author, where the classification 
of minimal mass blow-up solutions is obtained for the $L^2$ critical equation
\[
i\partial_t u + \Delta u + |x|^{-b}|u|^{\frac{4-2b}{N}}u = 0, \quad 
\text{with} \ 0<b<\min\{2,N\}, \ N\ge1.
\]


\subsection{The threshold case $c=c_*$}\label{threshold.sec}

Our initial motivation for studying \eqref{inls} came from the paper
\cite{greeks} by Trachanas and Zographopoulos, 
where the orbital stability of standing waves of \eqref{inls} is considered, with 
a special focus on the threshold value $c=c_*$. From a functional analytic perspective, an
interesting difficulty arises in this case due to the sharpness of Hardy's inequality at $c=c_*$.
The natural energy space associated with \eqref{inls},
$\mathcal{H}=\{u\in L^2(\rn) : H(u)<+\infty\}$, then satisfies $H^1(\rn)\subsetneq\mathcal{H}$. Indeed,
the ground states of \eqref{inls} with $c=c_*$ have a singularity of order $|x|^{-(N-2)/2}$ at the origin,
and thus lie in $\mathcal{H}\sm H^1(\rn)$. Fine properties of the space $\mathcal{H}$ are used
in \cite{greeks} to carry out a variational analysis of orbital stability, and our initial hope was to
be able to extend our analysis to the case $c=c_*$ using this functional framework.
However, it is not clear to us that local well-posedness holds in this case. 
Trachanas and Zographopoulos \cite[Theorem~3.1]{greeks} 
claim that (in the radial case) it follows by adapting a proof by Cazenave,
but we were not able to carry this through. Let us briefly explain why. 

For $c\in(0,c_*)$, Theorem~\ref{local.thm} above was proved in \cite{suzuki}, 
by adapting to $-\Delta-c|x|^{-2}$ Cazenave's proof of \cite[Theorems~3.3.9]{cazenave}, 
originally developed to deal with $-\Delta$ in bounded domains, 
where dispersive estimates are not available. 
This approach allows one to obtain existence of local (in time) solutions \cite[Theorems~3.3.5]{cazenave}, 
but an additional uniqueness result is required to obtain the full well-posedness result
\cite[Theorems~3.3.9]{cazenave}. In \cite{suzuki}, uniqueness relies on the Strichartz
estimates for $-\Delta-c|x|^{-2}$, which were established in \cite{burq}.
Unfortunately, as pointed out on p.~521 of \cite{burq}, 
these estimates break down at the threshold value $c=c_*$. 
Hence, the existence of local in time solutions is
ensured by \cite{suzuki}, but it is not clear if and how uniqueness can be proved.
As uniqueness is essential in our proof of Theorem~\ref{main.thm},
we shall only consider $c\in(0,c_*)$ here. Note that, for $c=c_*$, inequality \eqref{crucial} also
breaks down, and we do not know how to prove the crucial Proposition~\ref{compact.prop}.


\section{Ground states and the sharp global existence criterion}\label{global.sec}

In this section, we will prove Theorem~\ref{global.thm}.
We start by solving a minimization problem, the minimum of which is
attained at the ground states of the stationary equation. 
A crucial consequence will be the sharp Gagliardo--Nirenberg inequality leading to \eqref{posE}.
As these results may be useful in other problems, we will state them for any $1<p<1+\frac{4}{N-2}$. 
A similar problem was considered in \cite{visan2} for the specific case of three space dimensions and $p=3$.
Consider the Weinstein functional
\[
J^{p,N}(u):=\frac{H(u)^{\frac{p-1}{4}N}\|u\|^{2+\frac{p-1}{2}(2-N)}_{L^2}}{\|u\|^{p+1}_{L^{p+1}}}.
\]
 
\begin{proposition}\label{minimization.prop}
 For $1<p<1+\frac{4}{N-2}$,
\[
\alpha_{p,N} := \inf_{u\in H^1(\rn)\backslash\{0\}} J^{p,N}(u)
\]
is attained at a positive radial function $Q\in H^1(\rn)$, solution of the Euler--Lagrange equation
\begin{equation}\label{EL}
N\Big(\frac{p-1}{4}\Big)\Big(\Delta Q +c\frac{Q}{|x|^2}\Big)-\left(1+\frac{p-1}{4}(2-N)\right)Q+Q^p=0.
\end{equation}
Furthermore,
\begin{equation} \label{sharpestimate}
\alpha_{p,N}=2\frac{\|Q\|_{L^2}^{p-1}}{p+1}.
\end{equation}
In the case $p=1+\frac4N$, any minimizer of $J^{p,N}$ can be rescaled into a solution of \eqref{sinls}.
\end{proposition}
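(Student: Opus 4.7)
The plan is the direct method of the calculus of variations, in the spirit of Weinstein~\cite{w82}. I will take a minimizing sequence $\{u_n\}\subset H^1(\rn)\setminus\{0\}$ and reduce it by successive symmetries before extracting a limit. Schwarz symmetrization of $|u_n|$ preserves the $L^2$ and $L^{p+1}$ norms, decreases $\|\nabla u\|_{L^2}^2$ (P\'olya--Szeg\H o), and \emph{increases} $\int u^2/|x|^2\diff x$ (Hardy--Littlewood, since $|x|^{-2}$ is already radially symmetric decreasing); hence it decreases $H$ and therefore $J^{p,N}$, so we may take each $u_n$ nonnegative, radial, and radially nonincreasing. The two-parameter scale invariance $u\mapsto \mu u(\lambda\cdot)$ of $J^{p,N}$ next allows the normalization $\|u_n\|_{L^2}=\|u_n\|_{L^{p+1}}=1$; then $H(u_n)^{N(p-1)/4}=J^{p,N}(u_n)\to\alpha_{p,N}$, and \eqref{hardy2} yields a uniform $H^1$-bound.

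Extracting $u_n\wto Q$ in $H^1$, the compact Strauss embedding $H^1_{\mathrm{rad}}(\rn)\hookrightarrow L^{p+1}(\rn)$ (valid since $p+1<2N/(N-2)$) upgrades this to strong convergence in $L^{p+1}$, so that $\|Q\|_{L^{p+1}}=1$ and in particular $Q\not\equiv 0$. Weak lower semicontinuity of $\|\cdot\|_{L^2}$ and of $H$ then gives $J^{p,N}(Q)\le\alpha_{p,N}$, so $Q$ is a minimizer, radial and nonnegative. The Euler--Lagrange condition for $J^{p,N}$ at $Q$ reads
\[
\frac{2a}{H(Q)}\bigl(-\Delta Q-c\,Q/|x|^2\bigr)+\frac{b}{\|Q\|_{L^2}^2}\,Q=\frac{p+1}{\|Q\|_{L^{p+1}}^{p+1}}\,Q^p,
\]
with $a=N(p-1)/4$ and $b=2+(p-1)(2-N)/2$. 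Under $Q\mapsto\mu Q(\lambda\cdot)$ the two independent coefficient ratios transform as $\lambda^{-2}$ and $\mu^{p-1}$, so choosing $\mu,\lambda>0$ appropriately puts the equation into exactly the form \eqref{EL}. Strict positivity of $Q$ on $\rn\setminus\{0\}$ will follow from elliptic regularity and the strong maximum principle.

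For \eqref{sharpestimate} I will use two scalar identities derived from \eqref{EL}: the Nehari relation from testing against $Q$, and the Pohozaev identity from testing against $x\cdot\nabla Q$ (the latter is clean because $-\Delta-c|x|^{-2}$ is homogeneous of degree $-2$). Combined with the algebraic relation $2a+b=p+1$, these force $H(Q)=\|Q\|_{L^2}^2$ and $\|Q\|_{L^{p+1}}^{p+1}=\tfrac{p+1}{2}\|Q\|_{L^2}^2$; plugging back into $J^{p,N}(Q)$ then yields \eqref{sharpestimate} at once. Finally, in the $L^2$-critical case $p=1+4/N$ one has $a=1$ and $b/2=2/N$, so \eqref{EL} becomes $\Delta Q+cQ/|x|^2-(2/N)Q+Q^p=0$, and the explicit dilation $\tilde Q(x)=\mu Q(\lambda x)$ with $\lambda^2=\mu^{p-1}=N/2$ turns this into \eqref{sinls}.

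The main obstacle will be securing a nontrivial weak limit against the noncompact dilation group under which $J^{p,N}$ is invariant. Translation noncompactness is already broken by the singular potential, and the dilation noncompactness is handled by the explicit two-parameter normalization; the critical ingredient is the radial reduction, which unlocks Strauss's compact embedding into $L^{p+1}$.
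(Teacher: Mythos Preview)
Your proof is correct and follows essentially the same Weinstein-style direct method as the paper: symmetrize, use the two-parameter scaling to normalize, and extract a nontrivial limit via the compact radial embedding into $L^{p+1}$. The only noteworthy differences are cosmetic: the paper normalizes $\|\psi_n\|_{L^2}=H(\psi_n)=1$ rather than $\|u_n\|_{L^2}=\|u_n\|_{L^{p+1}}=1$, and obtains \eqref{sharpestimate} by a one-line scaling substitution (writing $\psi^*=(\alpha_{p,N}(p+1)/2)^{-1/(p-1)}Q$ with $\|\psi^*\|_{L^2}=1$) instead of your Nehari/Pohozaev computation, which is slightly longer but yields the extra structural information $H(Q)=\|Q\|_{L^2}^2$ and $\|Q\|_{L^{p+1}}^{p+1}=\tfrac{p+1}{2}\|Q\|_{L^2}^2$.
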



\begin{proof}
First note that the functional $J^{p,N}$ is invariant under the scaling 
\[
u(x) \to u^{\lambda,\mu}(x):=\mu u(\lambda x) \quad (\lambda,\mu>0).
\]
Indeed, we have
\begin{align*}
H (u^{\lambda,\mu})&=\lambda^{2-N}\mu^2 H (u),\\
\|u^{\lambda,\mu}\|_{L^2}^2&=\lambda^{-N}\mu^2\|u\|^2_{L^2},\\
\|u^{\lambda,\mu}\|^{p+1}_{L^{p+1}}&=\lambda^{-N}\mu^{p+1}\|u\|^{p+1}_{L^{p+1}}\\
\text{and so} \quad J^{p,N}(u^{\lambda,\mu})&=J^{p,N}(u).
\end{align*}
Let $\{u_n\}\subset H^1(\rn)$ be a minimizing sequence,
$\alpha_{p,N} = \lim_{n\rightarrow\infty} J^{p,N}(u_n)\ge0$. 
Since $J(|u|)\le J(u)$, we can suppose that $u_n\ge0$ for all $n\in\N$. Furthermore,
denoting by $u_n^*$ the Schwarz symmetrization of $u_n$ (see e.g.~\cite[pp.~80-83]{Lieb}),
we have that
\[
\intrn \frac{(u_n^*)^{2}}{|x|^2} dx \ge \intrn \frac{u_n^{2}}{|x|^2} dx,
\]
\[
\Vert \nabla u_n^*\Vert_{L^2}\le \Vert \nabla u_n\Vert_{L^2}
\quad\text{and}\quad
\Vert u_n^*\Vert_{L^2}\le\Vert u_n\Vert_{L^2}.
\]
We can therefore suppose that $u_n=u_n^*$ and, in particular, that each $u_n$ is positive,
radial and radially decreasing.
Thanks to the scaling invariance of $J^{p,N}$, 
we can further rescale the minimizing sequence by choosing 
$\lambda_n=\|u_n\|_{L^2}/H(u_n) $ and $\mu_n=\|u_n\|_{L^2}^{N/2-1}/H^{N/2}(u_n)$. 
We thus obtain a minimizing sequence $\psi_n=u_n^{\lambda_n,\mu_n}$ 
with the following properties:
\begin{align*}
\psi_n \ge 0, \quad \psi_n=\psi_n(|x|),\\
\|\psi_n\|_{L^2}=1, \quad H(\psi_n)=1,\\
\lim_{n\rightarrow\infty} J^{p,N}(\psi_n)=\alpha_{p,N}.
\end{align*}
In particular, $\{\psi_n\}$ is bounded in $H^1(\rn)$.
Thus, up to a subsequence, we can suppose that $\{\psi_n\}$ has a weak limit 
$\psi^* \in H^1(\rn)$.
Since $\{\psi_n\}\subset H^1_\mathrm{rad}(\rn)$ and $p+1\in(2,2^*)$, we can also suppose that
$\psi_n$ converges strongly to $\psi^*$ in $L^{p+1}(\rn)$. 
By weak lower semi-continuity of $\Vert\cdot\Vert_{L^2}$ and $H$ (see \cite{montefusco}), 
we obtain that $\|\psi^*\|_{L^2}\le 1$ and $H(\psi^*)\le 1$. Hence,
\[
\alpha_{p,N}\le J^{p,N}(\psi^*)\le \frac{1}{\|\psi^*\|^{p+1}_{L^{p+1}}}=\lim_{n\rightarrow\infty}J^{p,N}(\psi_n)=\alpha_{p,N}.
\]
It follows that $H(\psi^*)^{\frac{p-1}{4}N}\|\psi^*\|_{L^2}^{2+(p-1)(2-N)/2}=1$ and, 
therefore, $H(\psi^*)=\|\psi^*\|_{L^2}=1$.

Now, $\psi^*$ must satisfy the Euler--Lagrange equation $DJ^{p,N}(\psi^*)=0$.
We first remark that, for any $u \in H^1(\rn)$, $DJ^{p,N}(u)=0$ reads
\begin{equation}\label{euler}
-N\Big(\frac{p-1}{4}\Big)\Big(\Delta u + \frac{c}{|x|^2} u\Big)
+\Big(1+ \frac{p-1}{4}(2-N)\Big)\frac{H(u)}{\|u\|_{L^2}^2} u
-\Big(\frac{p+1}{2}\Big) \frac{H(u)}{\|u\|_{L^{p+1}}^{p+1}}|u|^{p-1}u=0.
\end{equation}
Taking into consideration $H(\psi^*)=1$, $\| \psi^*\|_{L^2}=1$, 
$\|\psi^*\|^{p+1}_{L^{p+1}}=\alpha_{p,N}^{-1}$ and $\psi^*\ge0$, we get
\[
N\Big(\frac{p-1}{4}\Big)\Big(\Delta \psi^*+c\frac{\psi^*}{|x|^2}\Big)
-\left(1+\frac{p-1}{4}(2-N) \right)\psi^*+\alpha_{p,N}\Big(\frac{p+1}{2}\Big)(\psi^*)^p=0.
\]
Then $Q$ defined by $\psi^*=(\alpha_{p,N}(p+1)/2)^{-1/(p-1)}Q$ readily satisfies (\ref{EL})
and \eqref{sharpestimate}.

Finally, in view of Remark~\ref{gs_scaling.rem} and Remark~\ref{scaling.rem} below,  
if $p=1+\frac4N$ and $u$ is a minimizer of $J^{1+\frac{4}{N},N}$,
the Euler--Lagrange equation \eqref{euler} reads
\begin{equation}\label{euler2}
\Delta u + \frac{c}{|x|^2} u
-\frac{2}{N}\frac{H(u)}{\|u\|_{L^2}^2} u +|u|^{p-1}u=0,
\end{equation}
and any solution $u$
can be rescaled into a solution $Q$ of \eqref{sinls} by letting
\[
u(x)=\lambda^{N/2}Q(\lambda x) \quad\text{with}\quad 
\lambda=\sqrt{\frac{2}{N}}\frac{\sqrt{H(u)}}{\Vert u\Vert_{L^2}}.
\]
The proof is complete.
\end{proof}

\begin{remark}\label{gs_scaling.rem}
\noindent
(a) Note that, by phase invariance of $J^{p,N}$, if $u$ is a minimizer so is 
$e^{i\theta}u, \ \theta\in\R$.

\medskip
\noindent
(b) For $p=1+\frac{4}{N}$, the Pohozaev identity for \eqref{sinls} implies
that any minimizer $u$ of $J^{1+\frac{4}{N},N}$ satisfies $E(u)=0$, that is 
\[
H(u)=\frac{1}{1+\frac{2}{N}}\|u\|_{L^{2+\frac{4}{N}}}^{2+\frac{4}{N}},
\]
and so
\begin{equation}\label{valueofmin}
J^{1+\frac{4}{N},N}(u)
=\frac{H(u)\Vert u\Vert_{L^2}^{\frac{4}{N}}}{\|u\|_{L^{2+\frac{4}{N}}}^{2+\frac{4}{N}}}
=\frac{\Vert u\Vert_{L^2}^{\frac{4}{N}}}{1+\frac{2}{N}}.
\end{equation}
Therefore, all minimizers have the same mass. 

\medskip
\noindent (c) The proof of Proposition~\ref{charact.prop} below shows
that all minimizers are positive and radial, up to a global phase factor.
\end{remark}

\begin{definition}\label{gs.def}
In the case $p=1+\frac{4}{N}$,
we call {\em ground states} the minimizers of $J^{1+\frac{4}{N},N}$ 
that are positive radial solutions of \eqref{sinls}. We denote the set of ground states by $\mathcal{G}$.
In view of Remark~\ref{gs_scaling.rem}~(b), there exists $M_\mathrm{gs}>0$ such that
$\Vert Q\Vert_{L^2}=M_\mathrm{gs}$ for all $Q\in\mathcal{G}$.
We call $M_\mathrm{gs}$ the {\em minimal mass}.
\end{definition}

\begin{remark}\label{scaling.rem}
The $L^2$ scaling $u_\lambda(x)=\lambda^{N/2}u(\lambda{x})$ yields
\[
H(u_\lambda)=\lambda^2H(u), \quad \Vert u_\lambda\Vert_{L^2}=\Vert u\Vert_{L^2},
\quad \Vert u_\lambda\Vert_{L^{p+1}}^{p+1}=\lambda^2 \Vert u\Vert_{L^{p+1}}^{p+1},
\quad \text{hence} \quad J^{p,N}(u_\lambda)=J^{p,N}(u).
\]
Therefore, if $Q$ is a ground state, then $Q_\lambda(x)=\lambda^{N/2}Q(\lambda x)$ 
is again a minimizer of $J^{1+\frac{4}{N},N}$, satisfying $\Vert Q_\lambda\Vert_{L^2}=M_\mathrm{gs}$ 
and the rescaled Euler--Lagrange equation
\[
\Delta Q_\lambda+ \frac{c}{|x|^2}Q_\lambda-\lambda^{2} Q_\lambda 
+ |Q_\lambda|^{\frac{4}{N}}Q_\lambda=0.
\]
It follows from Remark~\ref{minmass.rem} below that the
ground states are the solutions of \eqref{sinls} with smallest mass.
\end{remark}

The following sharp Gagliardo--Nirenberg inequality is an immediate consequence 
of Proposition~\ref{minimization.prop}.

\begin{corollary}\label{GN.cor} For $1<p<1+\frac{4}{N-2}$ and $u\in H^1(\rn)$, we have
\begin{equation}\label{GN}
\Vert u\Vert_{L^{p+1}}^{p+1} \le 
\alpha_{p,N}^{-1} H(u)^{\frac{p-1}{4}N} \Vert u\Vert_{L^{2}}^{2+\frac{p-1}{2}(2-N)},
\end{equation}
where
\[
\alpha_{p,N}=\frac{M_\mathrm{gs}^{\frac{4}{N}}}{1+\frac{2}{N}}
\quad\text{in the critical case} \quad p=1+\frac{4}{N}. 
\]
\end{corollary}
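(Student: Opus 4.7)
The plan is to observe that Corollary \ref{GN.cor} is essentially a restatement of the minimization result in Proposition \ref{minimization.prop}. By definition,
\[
\alpha_{p,N} = \inf_{v \in H^1(\rn)\setminus\{0\}} J^{p,N}(v) = \inf_{v \neq 0} \frac{H(v)^{\frac{p-1}{4}N}\|v\|^{2+\frac{p-1}{2}(2-N)}_{L^2}}{\|v\|^{p+1}_{L^{p+1}}},
\]
so for any nonzero $u \in H^1(\rn)$ one has $J^{p,N}(u) \geq \alpha_{p,N}$. Multiplying both sides by the (strictly positive) denominator $\|u\|^{p+1}_{L^{p+1}}$ and dividing by $\alpha_{p,N} > 0$ yields exactly inequality \eqref{GN}. (Here positivity of $\alpha_{p,N}$ follows from Proposition \ref{minimization.prop}, which asserts the infimum is attained by a nontrivial function $Q$; alternatively, the standard Gagliardo--Nirenberg inequality together with \eqref{hardy2} shows $J^{p,N}$ is bounded away from zero.) If $\|u\|_{L^{p+1}} = 0$, the inequality is trivial since then $u = 0$.

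It remains to identify the sharp constant in the critical case $p = 1 + \tfrac{4}{N}$. For this I would evaluate $J^{1+\frac{4}{N},N}$ at any ground state $Q \in \mathcal{G}$, which by Proposition \ref{minimization.prop} is a minimizer, so
\[
\alpha_{1+\frac{4}{N},N} = J^{1+\frac{4}{N},N}(Q).
\]
Applying Remark \ref{gs_scaling.rem}(b), whose content is that the Pohozaev identity forces $E(Q) = 0$ and hence $H(Q) = \tfrac{1}{1+2/N}\|Q\|_{L^{2+4/N}}^{2+4/N}$, one computes
\[
J^{1+\frac{4}{N},N}(Q) = \frac{\|Q\|_{L^2}^{\frac{4}{N}}}{1+\frac{2}{N}} = \frac{M_\mathrm{gs}^{\frac{4}{N}}}{1+\frac{2}{N}},
\]
using Definition \ref{gs.def} for the last equality. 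Combining with the previous paragraph gives the announced sharp constant. There is no real obstacle here; the corollary is a direct rearrangement of the variational characterization, and the explicit constant is read off from the computation already performed in Remark \ref{gs_scaling.rem}(b).
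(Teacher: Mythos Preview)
Your proposal is correct and matches the paper's approach: the paper simply states that the corollary is an immediate consequence of Proposition~\ref{minimization.prop}, and what you have written is precisely the spelling-out of that immediacy. One could equally well read off the critical constant directly from \eqref{sharpestimate} (since the $Q$ there is a minimizer, and the $L^2$-preserving rescaling to a solution of \eqref{sinls} shows $\|Q\|_{L^2}=M_\mathrm{gs}$), but your route through Remark~\ref{gs_scaling.rem}(b) is equivalent.
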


We are now in a position to prove our global well-posedness result.

\begin{proofof}~{\it Theorem~\ref{global.thm}}.
Consider a local solution of \eqref{inls}, as given by Theorem~\ref{local.thm}.
By the blow-up alternative, we need only show that, if $\Vert u_0\Vert_{L^2}<M_\text{gs}$,
then $H(u(t))$ remains bounded. 
We control $H(u(t))$ using the energy. By definition of $E$,
\[
H(u)=2E(u) + \frac{1}{1+\frac2N}\Vert{u}\Vert_{L^{2+\frac4N}}^{2+\frac4N}.
\]
Now, by Corollary~\ref{GN.cor},
\[
\|u\|_{L^{2+\frac4N}}^{2+\frac4N}\le 
\left(\frac{1+\frac2N}{M_\text{gs}^{\frac4N}}\right) H(u)\|u\|_{L^2}^{\frac4N}
=(1+\txt\frac2N)\disp\left(\frac{\|u\|_{L^2}}{M_\text{gs}}\right)^{\frac4N}H(u).
\]
Hence, by conservation of mass and energy,
\begin{align*}
H(u(t))	&\le 2E(u(t))+\left(\frac{\|u(t)\|_{L^2}}{M_\text{gs}}\right)^{\frac4N}H(u(t))\\
		&\phantom{\le}=2E(u_0)+\left(\frac{\|u_0\|_{L^2}}{M_\text{gs}}\right)^{\frac4N}H(u(t)).
\end{align*}
It follows that
\[
\left(1-\left(\frac{\|u_0\|_{L^2}}{M_\text{gs}}\right)^{\frac4N}\right)H(u(t))\le 2 E(u_0),
\]
which concludes the proof.
\end{proofof}


\section{Construction of minimal mass blow up solutions}\label{minmass.sec}

From now on and for the rest of the paper, unless specified otherwise, we focus on
the $L^2$ critical nonlinearity $p=1+\frac4N$.
In this section we construct finite time blow-up solutions to \eqref{inls} 
with minimal mass $M_\mathrm{gs}$. 
We thereby show that the condition for global well-posedness in Theorem~\ref{global.thm} 
is sharp. First we show that the equation is invariant under the pseudo-conformal transformation, 
which is defined in the following lemma. 

\begin{lemma}\label{pseudo-conf.lem}
Let $u$ be a global solution of \eqref{inls}. Then, for all $T\in\R$, the function
\[ 
u_T(t,x)=\frac{e^{-i\frac{|x|^2}{4(T-t)}}}{(T-t)^{N/2}}u\left(\frac{1}{T-t},\frac{x}{T-t}\right)
\]
is a solution of \eqref{inls} on $(-\infty, T)$, with $\|u_T\|_{L^2}=\|u\|_{L^2}$.
\end{lemma}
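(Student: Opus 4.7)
The plan is a direct computation organised in two parts. First, for the $L^2$ conservation: the change of variables $y = x/(T-t)$ has Jacobian $(T-t)^N$, which cancels exactly $|(T-t)^{-N/2}|^2$, so $\|u_T(t)\|_{L^2} = \|u(1/(T-t))\|_{L^2} = \|u_0\|_{L^2}$ by mass conservation for $u$. (The hypothesis that $u$ is global ensures that $u(1/(T-t))$ makes sense for all $t<T$.)

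For the PDE, I would factor $u_T(t,x) = (T-t)^{-N/2} e^{i\phi(t,x)} v(t,x)$, where $\phi(t,x) = -|x|^2/(4(T-t))$ and $v(t,x) := u(\tau,y)$ with $\tau = 1/(T-t)$, $y = x/(T-t)$, and expand each term of \eqref{inls} evaluated at $u_T$. The main tool is the product identity
\[
\Delta(e^{i\phi} f) = e^{i\phi}\bigl(\Delta f + 2i\nabla\phi\cdot\nabla f + (i\Delta\phi - |\nabla\phi|^2)f\bigr),
\]
combined with the explicit values $\nabla\phi = -x/(2(T-t))$, $|\nabla\phi|^2 = |x|^2/(4(T-t)^2)$ and $\Delta\phi = -N/(2(T-t))$. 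Three cancellations drive the argument: the $|\nabla\phi|^2$ term arising from $\Delta u_T$ cancels the $\partial_t\phi$ contribution in $i\partial_t u_T$; the cross term $2i\nabla\phi\cdot\nabla_x v$ cancels the spatial-dilation contribution $x\cdot\nabla_y u$ coming from $\partial_t v$; and the $i\Delta\phi$ term cancels the one produced by $\partial_t[(T-t)^{-N/2}]$.

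After these cancellations, using $|x|^{-2} = (T-t)^{-2}|y|^{-2}$ together with $\bigl((T-t)^{-N/2}\bigr)^{4/N} = (T-t)^{-2}$ (the critical scaling of the nonlinearity), every surviving term carries the common prefactor $(T-t)^{-N/2-2} e^{i\phi}$, and the whole equation reduces to
\[
(T-t)^{-N/2-2} e^{i\phi}\bigl[i\partial_\tau u + \Delta_y u + c|y|^{-2}u + |u|^{4/N}u\bigr](\tau,y) = 0,
\]
which vanishes because $u$ satisfies \eqref{inls} at the point $(\tau,y)$. The main obstacle is careful bookkeeping, but there is no structural difficulty: the proof succeeds precisely because $-\Delta$ and $c|x|^{-2}$ share the same degree $-2$ of homogeneity under spatial dilations, so the inverse-square term transforms in perfect analogy with the Laplacian. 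This is why the classical pseudo-conformal symmetry of the $L^2$-critical NLS extends verbatim to the equation with inverse-square potential.
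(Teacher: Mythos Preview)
Your proposal is correct and follows essentially the same route as the paper: a direct computation of each term of \eqref{inls} applied to $u_T$, followed by the observation that everything collapses to $(T-t)^{-N/2-2}e^{i\phi}$ times the equation for $u$ at $(\tau,y)$. Your organisation via the identity $\Delta(e^{i\phi}f)=e^{i\phi}(\Delta f+2i\nabla\phi\cdot\nabla f+(i\Delta\phi-|\nabla\phi|^2)f)$ and the explicit labelling of the three cancellations is a bit more structured than the paper's term-by-term expansion, and you additionally spell out the $L^2$ identity (which the paper states but does not argue), but the substance is the same.
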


\begin{proof}
Straightforward calculations give
\[
\partial_t u_T(t,x) = 
\frac{e^{-i\frac{|x|^2}{4(T-t)}}}{(T-t)^{N/2+2}} \left[ \frac{N}{2}(T-t)u -i\frac{|x|^2}{4}u
+\partial_t u +x\cdot\nabla u \right]\! \left(\frac{1}{T-t},\frac{x}{T-t}\right)
\]
and
\[
\D u_T(t,x) = \frac{e^{-i\frac{|x|^2}{4(T-t)}}}{(T-t)^{N/2+2}} \left[ -\frac{|x|^2}{4}u -i\frac{N}{2}(T-t)u
-ix\cdot\nabla u +\D u\right]\! \left(\frac{1}{T-t},\frac{x}{T-t}\right)
\]
for the derivatives.
For the nonlinear term we find
\[
|u_T|^{\frac{4}{N}}u_T(t,x)=
\frac{e^{-i\frac{|x|^2}{4(T-t)}}}{(T-t)^{N/2+2}}|u|^{\frac{4}{N}}u\Big(\frac{1}{T-t},\frac{x}{T-t}\Big),
\]
and for the potential term
\[
c|x|^{-2}u_T(t,x)=
c\Big|\frac{x}{T-t}\Big|^{-2}\frac{e^{-i\frac{|x|^2}{4(T-t)}}}{(T-t)^{N/2+2}}u\Big(\frac{1}{T-t},\frac{x}{T-t}\Big).
\]
It then follows from \eqref{inls} that
\[
\begin{split}
i\partial u_T(t,x)+&\Delta u_T(t,x)+c|x|^{-2}u_T(t,x)+|u_T|^{4/N}u_T(t,x)
\\
&=\frac{e^{-i\frac{|x|^2}{4(T-t)}}}{(T-t)^{N/2+2}}[i\partial_tu+\Delta u+c|x|^{-2}u
+|u|^{4/N}u]\Big(\frac{1}{T-t},\frac{x}{T-t}\Big)=0,
\end{split}
\]
which proves the lemma.
\end{proof}

\begin{remark}\label{minmass.rem}
It follows from Lemma~\ref{pseudo-conf.lem} that any solution $\ffi$ of \eqref{sinls}
satisfies $\|\ffi\|_{L^2}\ge M_\mathrm{gs}$. Otherwise, applying Lemma~\ref{pseudo-conf.lem}
with $u(t,x)=e^{it}\ffi(x)$, one could construct a finite time blow-up solution below the minimal mass 
threshold, which would contradict our global well-posedness result, Theorem~\ref{global.thm}.
\end{remark}

We now use the pseudo-conformal transformation and the symmetries of the equation 
to construct explicit blow-up solutions.

\begin{proposition} \label{blow-up_sol.prop}
For all $Q\in\mathcal{G}$, $T\in\R$, $\lambda_0>0$ and $\gamma_0\in\R$, the function 
$S_{Q,T,\lambda_0,\gamma_0}$, defined by
\begin{equation} \label{S2.def}
S_{Q,T,\lambda_0,\gamma_0}(t,x) = e^{i\gamma_0} e^{i\frac{\lambda_0^2}{T-t}} e^{-i\frac{|x|^2}{4(T-t)}}
\left(\frac{\lambda_0}{T-t}\right)^{N/2}Q\left( \frac{\lambda_0x}{T-t} \right),
\end{equation}
is a minimal mass solution of~\eqref{inls} defined on $(-\infty,T)$, and which blows up with speed
\[
\Vert\nabla S_{Q,T,\lambda_0,\gamma_0}(t)\Vert_{L^2} \sim \frac{C}{T-t} \quad \text{as } t\to T,
\]
for some $C>0$.
\end{proposition}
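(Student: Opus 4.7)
The plan is to obtain $S_{Q,T,\lambda_0,\gamma_0}$ by applying in succession the symmetries of \eqref{inls} and the pseudo-conformal transformation of Lemma~\ref{pseudo-conf.lem} to the standing wave $v(t,x)=e^{it}Q(x)$, then to read off the mass and the blow-up rate from the explicit formula. Since $Q\in\mathcal{G}$ solves \eqref{sinls}, the function $v$ is a global $H^1$ solution of \eqref{inls}. The phase and $L^2$ scaling symmetries (b) and (c) (with $p=1+\tfrac{4}{N}$) then produce the global solution
\[
w(t,x)=e^{i\gamma_0}\,e^{i\lambda_0^2 t}\lambda_0^{N/2}Q(\lambda_0 x).
\]
Applying Lemma~\ref{pseudo-conf.lem} to $w$ gives a solution on $(-\infty,T)$, and a direct substitution yields exactly the formula \eqref{S2.def} for $S_{Q,T,\lambda_0,\gamma_0}$. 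This shows $S_{Q,T,\lambda_0,\gamma_0}$ solves \eqref{inls} on $(-\infty,T)$.

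For the mass, $L^2$ scaling and phase multiplication preserve $\|\cdot\|_{L^2}$, and the pseudo-conformal transformation does too (by Lemma~\ref{pseudo-conf.lem}). Therefore
\[
\|S_{Q,T,\lambda_0,\gamma_0}(t)\|_{L^2}=\|Q\|_{L^2}=M_{\mathrm{gs}},
\]
so the solution has minimal mass.

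For the blow-up rate, write $S_{Q,T,\lambda_0,\gamma_0}(t,x)=e^{i\Phi(t,x)}V(t,x)$ with the real amplitude $V(t,x)=\mu(t)^{N/2}Q(\mu(t)x)$, $\mu(t)=\lambda_0/(T-t)$, and the phase $\Phi(t,x)=\gamma_0+\lambda_0^2/(T-t)-|x|^2/(4(T-t))$. Since $V$ is real, the cross term in the expansion of $|\nabla_x(e^{i\Phi}V)|^2$ vanishes and
\[
\|\nabla_x S(t)\|_{L^2}^2=\int|\nabla_x V|^2\diff x+\int|\nabla_x\Phi|^2 V^2\diff x.
\]
A change of variables $y=\mu(t)x$ gives $\int|\nabla_x V|^2\diff x=\mu(t)^2\|\nabla Q\|_{L^2}^2$ and $\int|x|^2 V^2\diff x=\mu(t)^{-2}\||\cdot|Q\|_{L^2}^2$, so
\[
\|\nabla_x S(t)\|_{L^2}^2=\frac{\lambda_0^2}{(T-t)^2}\|\nabla Q\|_{L^2}^2+\frac{1}{4\lambda_0^2}\||\cdot|Q\|_{L^2}^2.
\]
As $t\to T$ the first term dominates and we obtain $\|\nabla S(t)\|_{L^2}\sim \lambda_0\|\nabla Q\|_{L^2}/(T-t)$, which is the claimed rate.

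There is no serious obstacle: the construction is an application of established symmetries, and the asymptotic is a direct calculation. The only mildly delicate point is noting that $\||\cdot|Q\|_{L^2}<\infty$, so that the pseudo-conformal orbit actually lies in $H^1$; this follows from the exponential decay of the ground state $Q$ (a standard consequence of \eqref{EL} for $p=1+\tfrac{4}{N}$ via a Kato-type argument away from the origin), which makes the second integral above finite for each $t<T$.
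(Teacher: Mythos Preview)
Your argument is correct and follows the same route as the paper: apply the phase and $L^2$ scaling symmetries to the standing wave $e^{it}Q$, then the pseudo-conformal transformation of Lemma~\ref{pseudo-conf.lem}, and read off mass and blow-up rate. The paper's own proof is a single sentence invoking Lemma~\ref{pseudo-conf.lem} on $u_{Q,\lambda_0,\gamma_0}(t,x)=e^{i\gamma_0}e^{i\lambda_0^2 t}\lambda_0^{N/2}Q(\lambda_0 x)$; your version adds the explicit gradient computation and the remark that $|x|Q\in L^2$ (needed for $S\in H^1$), both of which the paper leaves implicit.
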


\begin{proof}
The proposition is a simple consequence of Lemma~\ref{pseudo-conf.lem} applied to the global solution
\[
u_{Q,\lambda_0,\gamma_0}(t,x) = e^{i\gamma_0} e^{i\lambda_0^2 t}\lambda_0^{N/2}Q(\lambda_0x),
\]
which is given by applying the scaling and phase symmetries to the standing wave $u(t,x)=e^{it}Q(x)$.
\end{proof}

\begin{remark}
Note that the blow-up solutions of the family exhibited in Proposition~\ref{blow-up_sol.prop} 
can all be retrieved from the solution
\[
S(t,x) := S_{0,1,0}(t,x) = e^{i\frac{|x|^2}{4t}} e^{-\frac{i}{t}} \frac{1}{|t|^{N/2}}Q\left(-\frac{x}{t}\right),
\]
defined on $(-\infty,0)$ and which blows up at $t=0$ with speed
\[
\Vert\nabla S(t)\Vert_{L^2} \sim \frac{C}{|t|}\quad \text{as } t\uparrow0,
\]
for some $C>0$. Indeed, all the solutions $S_{Q,T,\lambda_0,\gamma_0}$ are equal to $S$,
up to the symmetries (a), (b) and (c) stated in the introduction.
Namely, if we apply the changes $u(t,x)\to \lambda_0^{-N/2}u(\lambda_0^{-2}t,\lambda_0^{-1}x)$, 
$u(t,x)\to u(t-T,x)$
and finally $u(t,x)\to e^{i\gamma_0}u(t,x)$ to $S$, we obtain $S_{Q,T,\lambda_0,\gamma_0}$.
\end{remark}


\section{Classification of minimal mass blow-up solutions}
In this section we prove the classification of minimal mass blow-up solutions of \eqref{inls}
for the $L^2$ critical nonlinearity $p=1+\frac4N$.
We start with a variational result, characterizing ground state solutions.

\subsection{Variational characterization of ground states}

\begin{proposition}\label{charact.prop}
Let $v\in H^1(\rn)$ be such that 
\begin{equation} \label{varchar}
\|v\|_{L^2}=M_\mathrm{gs} \quad \text{and} \quad E(v)=0.
\end{equation}
Then there exist $Q\in\mathcal{G}$, and $\gamma_0 \in \R$ 
such that $v(x)=e^{i\gamma_0}\lambda_0^{N/2}Q(\lambda_0x)$,
where $\disp\lambda_0=\sqrt{\frac{2}{N}}\frac{\sqrt{H(|v|)}}{M_\mathrm{gs}}$.
\end{proposition}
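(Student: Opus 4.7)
The plan is to show that the hypotheses $\Vert v\Vert_{L^2}=M_\mathrm{gs}$ and $E(v)=0$ force $v$ to be a minimizer of the Weinstein functional $J^{1+\frac{4}{N},N}$, and then, via the diamagnetic and symmetrization inequalities, to identify $v$ — up to a constant phase and an $L^2$ rescaling — with a positive radial minimizer, i.e.~a ground state. The identity $E(v)=0$ is equivalent to $H(v)=\frac{1}{1+\frac{2}{N}}\Vert v\Vert_{L^{2+\frac{4}{N}}}^{2+\frac{4}{N}}$, so plugging this together with $\Vert v\Vert_{L^2}=M_\mathrm{gs}$ into the definition of $J^{1+\frac{4}{N},N}$ yields, exactly as in \eqref{valueofmin},
\[
J^{1+\frac{4}{N},N}(v)=\frac{M_\mathrm{gs}^{\frac{4}{N}}}{1+\frac{2}{N}}=\alpha_{1+\frac{4}{N},N},
\]
so $v$ attains the infimum in Proposition~\ref{minimization.prop}.

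Next I would reduce to a real nonnegative function via the diamagnetic inequality $|\nabla|v||\le|\nabla v|$, which implies $H(|v|)\le H(v)$ while $\Vert\,|v|\,\Vert_{L^q}=\Vert v\Vert_{L^q}$ for every $q\ge 1$. Hence $J^{1+\frac{4}{N},N}(|v|)\le J^{1+\frac{4}{N},N}(v)=\alpha_{1+\frac{4}{N},N}$, so $|v|$ is also a minimizer and equality $|\nabla|v||=|\nabla v|$ must hold a.e. In particular $|v|$ satisfies the Euler--Lagrange equation \eqref{euler2}, and standard elliptic regularity together with the strong maximum principle give $|v|>0$. Writing $v=|v|e^{i\theta}$, the pointwise identity $|\nabla v|^2=|\nabla|v||^2+|v|^2|\nabla\theta|^2$ then forces $\nabla\theta\equiv 0$, so $v=e^{i\gamma_0}|v|$ for some $\gamma_0\in\R$.

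To obtain radial symmetry, I would Schwarz-symmetrize $w:=|v|$. Because $|x|^{-2}$ is itself radially decreasing, Hardy--Littlewood gives $\int w^2/|x|^2\le\int(w^*)^2/|x|^2$, and combined with the P\'olya--Szeg\H{o} inequality $\Vert\nabla w^*\Vert_{L^2}\le\Vert\nabla w\Vert_{L^2}$ this yields $H(w^*)\le H(w)$. The $L^2$ and $L^{p+1}$ norms being preserved, $w^*$ is again a minimizer of $J^{1+\frac{4}{N},N}$, which forces equality in both symmetrization inequalities; the smoothness of minimizers of \eqref{euler2} rules out the Brothers--Ziemer obstruction, so $w=w^*$ and $v$ is positive and radial about the origin. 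Proposition~\ref{minimization.prop} and Remark~\ref{scaling.rem} then let me write $|v|(x)=\lambda_0^{N/2}Q(\lambda_0 x)$ for some $Q\in\mathcal{G}$.

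Finally, multiplying \eqref{sinls} by $Q$, integrating, and combining with $E(Q)=0$ gives $H(Q)=\frac{N}{2}M_\mathrm{gs}^2$; the scaling identity $H(|v|)=\lambda_0^2 H(Q)$ then pins down $\lambda_0=\sqrt{\frac{2}{N}}\sqrt{H(|v|)}/M_\mathrm{gs}$. The main obstacle will be the rigidity step for radial symmetry: equality in P\'olya--Szeg\H{o} or Hardy--Littlewood alone does not in general imply $w=w^*$, and I will have to exploit the strict radial monotonicity of $|x|^{-2}$ together with the regularity of $w$ inherited from \eqref{euler2} to rule out translated rearrangements and genuinely center $v$ at the origin.
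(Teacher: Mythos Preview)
Your proposal is correct and follows essentially the same architecture as the paper's proof: show $v$ minimizes $J^{1+\frac{4}{N},N}$, pass to $|v|$, establish radiality, identify the constant phase, and rescale to a ground state.

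The one place where the paper is more efficient is precisely the step you flag as the main obstacle. Rather than arguing via equality in P\'olya--Szeg\H{o} and invoking Brothers--Ziemer, the paper exploits the \emph{strict} Hardy--Littlewood inequality directly: since $|x|^{-2}$ is strictly radially decreasing, Theorem~3.4 in \cite{Lieb} gives that if $|v|$ were a positive minimizer that is not radial, then $\int (|v|^*)^2/|x|^2\,dx > \int |v|^2/|x|^2\,dx$ strictly, so $J^{1+\frac4N,N}(|v|^*)<J^{1+\frac4N,N}(|v|)$, a contradiction. This bypasses the equality case of P\'olya--Szeg\H{o} entirely and makes your worry about the Brothers--Ziemer obstruction moot. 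Your phase argument and the final identification of $\lambda_0$ match the paper's.
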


\begin{proof}
It follows directly from \eqref{valueofmin} and \eqref{varchar} that $v$ is a 
minimizer of $J^{1+\frac4N,N}$. Since $J^{1+\frac4N,N}(|v|)\le J^{1+\frac4N,N}(v)$,
$|v|$ is also a minimizer. Furthermore, any positive minimizer is radial. Indeed, suppose $v_0$ is a positive minimizer that is not radial,
and let $v_0^*$ be its Schwarz symmetrization. 
Then, in view of \cite[Theorem~3.4]{Lieb}, we have that
\[
\intrn \frac{|v_0^*|^{2}}{|x|^2} dx > \intrn \frac{|v_0|^{2}}{|x|^2} dx.
\]
Since, on the other hand,
\[
\Vert \nabla v_0^*\Vert_{L^2}\le \Vert \nabla v_0\Vert_{L^2}
\quad\text{and}\quad
\Vert v_0^*\Vert_{L^2}\le\Vert v_0\Vert_{L^2},
\]
we get $J^{1+\frac4N,N}(v_0^*)<J^{1+\frac4N,N}(v_0)$, a contradiction. We deduce that $|v|$ is radial. Now, 
$|v|$ satisfies the Euler--Lagrange equation \eqref{euler2} and we know
there exists $Q\in\mathcal{G}$ such that
\[
|v(x)|=\lambda_0^{N/2}Q(\lambda_0 x), \quad \text{where} \quad \lambda_0
=\Big(\frac{2}{N}\Big)^{\frac{1}{2}}\frac{H(|v|)^\frac{1}{2}}{\|v\|_{L^2}}.
\]
The proof will be complete if we show that $w(x)=v(x)/|v(x)|$ is constant on $\rn$, 
for this implies that there exists $\gamma_0\in\R$ such that 
$w(x)=e^{i\gamma_0}$ for all $x\in\rn$.
Differentiating $|w|^2=1$ yields $\re(\overline{w}\nabla w)\equiv 0$, thus
\[
|\nabla v|^2=|\nabla(|v|)|^2+|v|^2|\nabla w|^2+2|v|\nabla(|v|)\cdot \re(\overline{w}\nabla w)
\]
and
\[
\|\nabla v\|_{L^2}^2=\|\nabla (|v|)\|_{L^2}^2+\int_{\rn} |v|^2|\nabla w|^2dx.
\]
Together with Hardy's inequality, this implies that, if $|\nabla w|\neq 0$
then $J^{1+\frac4N,N}(|v|)< J^{1+\frac4N,N}(v)$, which is a contradiction. 
Hence, $w$ is constant, and the proof is complete.
\end{proof}


\subsection{Compactness} 

We now establish a compactness result which will play a crucial role 
in the proof of Theorem~\ref{main.thm}.
Owing to the equivalence of the seminorms $\|\nabla u\|_{L^2}^2$ and $H(u)$, 
the following concentration-compactness lemma can be proved 
by minor modifications to the proof of Proposition~1.7.6 in \cite{cazenave}.

\begin{lemma}\label{conc-comp.lemma}
Let $\{v_n\}_{n\in \N}\subset H^1(\rn)$ satisfy
\[
\lim_{n \rightarrow\infty}\|v_n\|_{L^2}=M<+\infty \quad \text{and} \quad \sup_{n\in\N} H(v_n) <+\infty.
\]
Then there exists a subsequence $\{v_{n_k}\}_{k\in \N}$ that satisfies one of the following properties.

\medskip
\noindent
(V) $\|v_{n_k}\|_{L^q}\rightarrow 0$ as $k\rightarrow\infty$ for all $q\in(2,2^*)$.

\medskip
\noindent
(D) There are sequences ${w_k},{z_k}\in H^1(\rn)$ and a constant $\alpha\in (0,1)$ such that:
\begin{enumerate}
\item $\dist(\supp(w_k),\supp(z_k))\rightarrow\infty$;
\item $|v_k|+|z_k|\le |v_{n_k}|$;
\item $\sup_{k\in\N} (\|w_k\|_{H^1}+\|z_k\|_{H^1})<+\infty$;
\item $\|w_k\|_{L^2}\rightarrow\alpha M$ and $\|z_k\|_{L^2}\to (1-\alpha)M$ as $k\rightarrow\infty$;
\item $\lim_{k\rightarrow\infty}\big| \int_{\rn}|v_{n_k}|^q-\int_{\rn}|w_k|^q-\int_{\rn}|z_k|^q\big|=0$ 
for all $q\in[2,2^*)$;
\item $\liminf_{k\rightarrow\infty}\big\{  H(v_{n_k})-H(w_k)-H(z_k) \big\}\ge 0$.
\end{enumerate} 

\medskip
\noindent
(C) There exists $v\in H^1(\rn)$ and a sequence $\{y_k\}_{k\in\N}\subset \rn$ such that
\[
v_{n_k}(\cdot -y_k)\rightarrow v \quad \text{in} \quad L^q(\rn), \quad \forall q\in[2,2^*).
\]
\end{lemma}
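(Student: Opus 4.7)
The plan is to adapt the proof of Proposition~1.7.6 in \cite{cazenave} to the functional $H$, following Lions' concentration-compactness principle. Throughout, the equivalence of seminorms \eqref{hardy2} plays a dual role: it promotes $\sup_n H(v_n) < +\infty$ to boundedness of $\{v_n\}$ in $H^1(\rn)$, and it guarantees $H(u) \ge 0$ for every $u \in H^1(\rn)$. First I would pass to a subsequence along which the Lévy concentration function
\[
\rho_n(t) := \sup_{y \in \rn} \int_{B(y,t)} |v_n(x)|^2 \diff x
\]
converges pointwise to a non-decreasing function $\rho$, and set $\wt\alpha := \lim_{t \to \infty}\rho(t) \in [0, M^2]$. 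The three alternatives (V), (C), (D) correspond to $\wt\alpha = 0$, $\wt\alpha = M^2$, and $\wt\alpha \in (0, M^2)$ respectively. Case (V) follows from Lions' lemma, which uses only $H^1$-boundedness; case (C) follows from selecting $y_k \in \rn$ and $t_k \to \infty$ with $\int_{B(y_k,t_k)} |v_{n_k}|^2 \to M^2$, then applying Rellich--Kondrachov to the translates $v_{n_k}(\cdot - y_k)$ and upgrading to global $L^q$ convergence via tightness.

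For the dichotomy case, I would choose $y_k \in \rn$ and $R_k \to +\infty$ so that $\int_{B(y_k,R_k)} |v_{n_k}|^2 \to \wt\alpha$ and the annular loss $\int_{B(y_k,4R_k) \sm B(y_k,R_k)} |v_{n_k}|^2 \to 0$, and then introduce a smooth \emph{IMS partition of unity} $\chi_1, \chi_2, \chi_3 \in [0,1]$ with $\chi_1^2 + \chi_2^2 + \chi_3^2 \equiv 1$, such that $\chi_1 = 1$ on $B(y_k,R_k)$, $\supp \chi_1 \subset B(y_k,2R_k)$, $\chi_3 = 0$ on $B(y_k,3R_k)$, $\chi_3 = 1$ outside $B(y_k, 4R_k)$, $\chi_1\chi_3 \equiv 0$, and $\Vert\nabla \chi_i\Vert_\infty = O(1/R_k)$. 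Set $w_k := \chi_1 v_{n_k}$, $z_k := \chi_3 v_{n_k}$, and $r_k := \chi_2 v_{n_k}$. Properties (1)--(4) follow from the construction: $\supp w_k$ and $\supp z_k$ are separated by at least $R_k$; the bound $|w_k| + |z_k| = (\chi_1 + \chi_3)|v_{n_k}| \le |v_{n_k}|$ holds since $\chi_1\chi_3 \equiv 0$; uniform $H^1$-bounds are immediate; and the $L^2$-masses converge as prescribed. Property (5) follows from $\Vert r_k\Vert_{L^2}^2 \le \int_{B(y_k,4R_k)\sm B(y_k,R_k)} |v_{n_k}|^2 \to 0$ combined with boundedness in $L^{2^*}(\rn)$ (Sobolev) and $L^p$-interpolation, which yields $\Vert r_k\Vert_{L^q} \to 0$ for $q \in [2, 2^*)$.

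The main obstacle, and the only place where the inverse-square potential matters, is property (6). The key cancellation is supplied by the IMS localization identity
\[
\Vert\nabla v_{n_k}\Vert_{L^2}^2 = \sum_{i=1}^3 \Vert\nabla(\chi_i v_{n_k})\Vert_{L^2}^2 - \int_{\rn} |v_{n_k}|^2 \sum_{i=1}^3 |\nabla \chi_i|^2 \diff x,
\]
in which the apparent cross-term $2\re \sum_i \int \chi_i \overline{v_{n_k}}\,\nabla v_{n_k}\cdot\nabla\chi_i \diff x$ vanishes by the identity $\sum_i \chi_i \nabla\chi_i = \tfrac12\nabla\bigl(\sum_i \chi_i^2\bigr) \equiv 0$, and the localization error is $O(R_k^{-2})M^2 \to 0$. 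Simultaneously, since $|x|^{-2}$ is a multiplication operator and $\sum \chi_i^2 \equiv 1$, the Hardy integral splits \emph{exactly}:
\[
\int_{\rn} \frac{|v_{n_k}|^2}{|x|^2} \diff x = \sum_{i=1}^3 \int_{\rn} \frac{|\chi_i v_{n_k}|^2}{|x|^2} \diff x.
\]
Combining the two displays yields $H(v_{n_k}) = H(w_k) + H(z_k) + H(r_k) + o(1)$, and property (6) then follows from $H(r_k) \ge (1-c/c_*)\Vert\nabla r_k\Vert_{L^2}^2 \ge 0$, which is precisely \eqref{hardy2} for $c < c_*$. The essential insight is that the quadratic normalization $\sum \chi_i^2 \equiv 1$ (rather than a linear partition $\sum\chi_i \equiv 1$) is exactly what makes the gradient part of $H$ split up to an $o(1)$ error and the Hardy part split exactly with no cross terms; a linear partition would leave a genuine, wrong-signed Hardy cross term that is not $o(1)$ in general.
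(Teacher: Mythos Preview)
Your proposal is correct and follows exactly the route the paper indicates: the paper gives no detailed proof of this lemma, merely stating that it ``can be proved by minor modifications to the proof of Proposition~1.7.6 in \cite{cazenave}'' using the equivalence of the seminorms $\Vert\nabla u\Vert_{L^2}^2$ and $H(u)$, and you carry out precisely that adaptation. Your use of an IMS (quadratic) partition $\sum\chi_i^2\equiv 1$ to establish property~(6) is a clean realization of the ``minor modification''---the exact splitting of the Hardy integral together with $H(r_k)\ge 0$ from \eqref{hardy2} is exactly where $c<c_*$ enters---though your closing remark slightly overstates the necessity of the quadratic partition: a linear cutoff can also be made to work if one additionally arranges (by a pigeonhole argument on the bounded sequence $\Vert\nabla v_{n_k}\Vert_{L^2}^2$) that the gradient is small on the annulus and then applies Hardy's inequality to a localized function there.
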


We are now in a position to prove the following proposition, which is the key step
to the classification of minimal mass blow-up solutions. 

\begin{proposition}\label{compact.prop}
Consider a sequence $\{v_n\}_{n\in\nat}\subset H^1(\rn)$ satisfying
\begin{equation} \label{a}
\lim_{n\rightarrow\infty}\|v_n\|_{L^2}=M_\mathrm{gs}, \quad 0<\limsup_{n\in\N}H(v_n)<+\infty,
\quad \limsup_{n\rightarrow\infty}E(v_n)\le 0.
\end{equation}
Then there exists a subsequence $\{v_{n_k}\}_{k\in\nat}$, $\gamma_0\in\R$ and $Q\in\mathcal{G}$ such that
\begin{equation}\label{compact}
\lim_{k\rightarrow\infty}\|v_{n_k}-e^{i\gamma_0}Q\|_{H^1}=0.
\end{equation}
\end{proposition}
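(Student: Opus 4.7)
The plan is to invoke the concentration--compactness alternative (Lemma~\ref{conc-comp.lemma}), use the sharp Gagliardo--Nirenberg inequality \eqref{posE} to rule out vanishing and dichotomy, and upgrade the resulting $L^q$-compactness to strong $H^1$ convergence via a Brezis--Lieb splitting. By Hardy's inequality \eqref{hardy2}, $H(\cdot)$ is equivalent to $\|\nabla\cdot\|_{L^2}^2$, so the hypotheses of the proposition imply $\{v_n\}$ is bounded in $H^1(\rn)$. Passing to a subsequence I may assume $H(v_n)\to L\in(0,\infty)$ and $E(v_n)\to E^*\le 0$; combining \eqref{posE} with $\|v_n\|_{L^2}\to M_\mathrm{gs}$ forces $E^*\ge 0$, hence $E^*=0$.

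Next I apply Lemma~\ref{conc-comp.lemma}. Vanishing is ruled out since $\|v_n\|_{L^{p+1}}^{p+1}\to 0$ would give $E(v_n)\to L/2>0$. In the dichotomy alternative, the masses of $w_k$ and $z_k$ converge to $\alpha M_\mathrm{gs}$ and $(1-\alpha)M_\mathrm{gs}$ with $\alpha\in(0,1)$; inequality \eqref{posE} applied to each factor yields strictly positive Gagliardo--Nirenberg gaps $1-\alpha^{4/N}$ and $1-(1-\alpha)^{4/N}$, and properties (5)--(6) of the decomposition give $E(v_{n_k})\ge E(w_k)+E(z_k)+o(1)\ge\delta(H(w_k)+H(z_k))+o(1)$ for some $\delta>0$. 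Either $H(w_k)+H(z_k)\to 0$, whence Corollary~\ref{GN.cor} forces $\|v_{n_k}\|_{L^{p+1}}^{p+1}\to 0$ and $E(v_{n_k})\to L/2>0$, or $\liminf E(v_{n_k})>0$; both contradict $E^*=0$. Case (C) therefore holds, i.e.\ $\tilde v_k:=v_{n_k}(\cdot-y_k)\to v$ in every $L^q$, $q\in[2,2^*)$, with $\|v\|_{L^2}=M_\mathrm{gs}$.

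The main obstacle is showing that $\{y_k\}$ stays bounded, since \eqref{inls} is not translation invariant. The tool is the auxiliary lemma that $\int_{\rn}u_n^2/|x|^2\diff x\to 0$ whenever $u_n\rightharpoonup 0$ weakly in $H^1(\rn)$, which follows by splitting at $|x|=R$, bounding the exterior by $R^{-2}\|u_n\|_{L^2}^2$, and on $B_R$ applying H\"older with an exponent $s<N/(N-2)$ together with Rellich--Kondrachov. If $|y_k|\to\infty$ then $v_{n_k}\rightharpoonup 0$ weakly in $H^1$, so $H(v_{n_k})=\|\nabla v_{n_k}\|_{L^2}^2+o(1)\to L$, and the free-space sharp Gagliardo--Nirenberg inequality with best constant $\alpha_0$ (corresponding to free ground-state mass $M_0$) takes over. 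The strict inequality $M_\mathrm{gs}<M_0$, which holds because $J^{p,N}(Q_0)<J_0(Q_0)$ for any free ground state $Q_0$ owing to the positive Hardy contribution, then forces $\liminf E(v_{n_k})>0$, contradicting $E^*=0$. Hence $y_k\to y_0$ along a subsequence. Setting $\bar v:=v(\cdot+y_0)$ and $w_k:=v_{n_k}-\bar v\rightharpoonup 0$, a Brezis--Lieb splitting for $\|\cdot\|_{L^{p+1}}^{p+1}$, the weak-convergence identity for $\|\nabla\cdot\|_{L^2}^2$, and the auxiliary lemma applied to the cross term in the potential together give $E(v_{n_k})=E(\bar v)+E(w_k)+o(1)$. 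Since $\|w_k\|_{L^2}\to 0$, Corollary~\ref{GN.cor} forces $\|w_k\|_{L^{p+1}}^{p+1}\to 0$, and combining $E^*=0$ with $E(\bar v)\ge 0$ from \eqref{posE} at mass $M_\mathrm{gs}$ yields $\|\nabla w_k\|_{L^2}\to 0$, so $v_{n_k}\to\bar v$ in $H^1$. Finally $\bar v$ satisfies $\|\bar v\|_{L^2}=M_\mathrm{gs}$ and $E(\bar v)=0$, so Proposition~\ref{charact.prop} gives $\bar v=e^{i\gamma_0}\lambda_0^{N/2}Q(\lambda_0\cdot)$ for some $Q\in\mathcal{G}$, which is the stated form of the limit.
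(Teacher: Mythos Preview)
Your overall strategy is sound and Steps~1 and~3 are essentially correct, but there is a genuine gap in Step~2: the ``auxiliary lemma'' you invoke --- that $u_n\rightharpoonup 0$ weakly in $H^1(\rn)$ implies $\int_{\rn}|u_n|^2/|x|^2\diff x\to 0$ --- is \emph{false}. The map $H^1(\rn)\hookrightarrow L^2(|x|^{-2}\diff x)$ given by Hardy's inequality is not compact: the sequence $u_n(x)=n^{(N-2)/2}\phi(nx)$ with $\phi\in C_c^\infty(\rn)\setminus\{0\}$ satisfies $\|u_n\|_{L^2}\to 0$, $\|\nabla u_n\|_{L^2}=\|\nabla\phi\|_{L^2}$, hence $u_n\rightharpoonup 0$ in $H^1$, yet $\int|u_n|^2/|x|^2=\int|\phi|^2/|y|^2>0$ for every $n$. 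Your proposed proof via H\"older and Rellich--Kondrachov cannot close because the exponents are incompatible: $|x|^{-2}\in L^s(B_R)$ requires $s<N/2$, which forces the dual exponent $s'>N/(N-2)$, i.e.\ $2s'>2^*$, so no subcritical Sobolev embedding controls $\|u_n\|_{L^{2s'}(B_R)}$.

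The conclusion you need --- that $\int|v_{n_k}|^2/|x|^2\to 0$ when $|y_k|\to\infty$ --- is in fact true here, but it requires the specific structure of the sequence (its $L^2$ mass is escaping to infinity along $y_k$) together with the variational hypotheses \eqref{a}. The paper establishes this via a delicate contradiction argument: assuming $\liminf\int_{|x|<R}|v_{n_k}|^2/|x|^2\ge\delta>0$, it introduces a cutoff decomposition $v_{n_k}=u_k+w_k$, uses Hardy's inequality \eqref{hardy} on $u_k$ to extract a positive lower bound $(c_*-c)\delta$ for $H(u_k)$, and then combines this with the sharp Gagliardo--Nirenberg inequality \eqref{GN} applied to $w_k$ and the energy constraint $\limsup E(v_{n_k})\le 0$ to reach a contradiction. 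Your alternative route via the free-space Gagliardo--Nirenberg inequality and the strict inequality $M_\mathrm{gs}<M_0$ is an attractive idea and would succeed \emph{once} the vanishing of the Hardy term is established --- but that vanishing is precisely the hard part, and it cannot be obtained from weak $H^1$ convergence alone.
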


\begin{proof}
The behavior of the sequence $\{v_n\}$ is constrained by the concentration-compactness lemma. 
Our proof proceeds in several steps: by ruling out (V) and (D),
we will first show that property (C) holds in Lemma~\ref{conc-comp.lemma}. We will then prove
the localization of the subsequence $\{v_{n_k}\}$, {\em i.e.}~that
the sequence of translations ${y_k}$ is bounded in $\rn$. This step is the most delicate one
due to the inverse-square potential, and will be handled by a subtle combination of Hardy's inequality
and the Gagliardo--Nirenberg inequality \eqref{GN}. Once this localization property is established, 
the result will follow from Proposition~\ref{charact.prop}.

\medskip
\noindent
{\em Step~1: Compactness in $L^q(\rn)$.}
First suppose that (V) holds. Since $p+1\in (2,2^*)$ we get,
\[
\limsup_{k\rightarrow\infty} E(v_{n_k})=\limsup_{k\rightarrow\infty}\Big\{\frac{1}{2}H(v_{n_k})-\frac{1}{p+1}\int_{\rn} |v_{n_k}|^{p+1}dx\Big\}=\frac{1}{2}\limsup_{k\rightarrow\infty}H(v_{n_k})>0,
\]
which contradicts our assumptions.

Suppose now that (D) holds. Then from properties (5) and (6) it follows that
\[
\begin{split}
\limsup_{k\rightarrow\infty}\{E(w_k)+E(z_k)\}&\le\frac{1}{2}\liminf_{k\rightarrow\infty}H(v_{n_k})
-\frac{1}{p+1}\limsup_{k\rightarrow\infty}\int_{\rn}|v_{n_k}|dx \\
&\le\limsup_{k\rightarrow\infty} E(v_{n_k})\le 0.
\end{split}
\]
Property (D)(4) of Lemma (\ref{conc-comp.lemma}) and inequality \eqref{posE} 
imply that $E(w_k), E(z_k)\ge 0$ for $k$ large enough, thus
\[
E(w_k)\rightarrow 0 \quad \text{and} \quad E(z_k)\rightarrow 0.
\]
Using again property (4) and inequality \eqref{posE}, we get that
\[
H(w_k)\rightarrow 0 \quad\text{and}\quad H(z_k)\rightarrow 0.
\]
Hence,
\[
\lim_{k\rightarrow\infty}\int_{\rn}|v_{n_k}|^{p+1}dx
=\lim_{k\rightarrow\infty}\left(\int_{\rn}|w_k|^{p+1}dx+\int_{\rn}|z_k|^{p+1}dx\right)=0,
\]
leading again to the contradiction that $\limsup_{k\rightarrow\infty}E(v_{n_k})>0$. Consequently,
by Lemma~\ref{conc-comp.lemma}, there exists $v\in H^1(\rn)$
and a sequence of translations $\{y_k\}_{k\in\N}\subset \rn$ such that
\begin{equation}\label{tildecompact}
\wt{v}_{n_k} \rightarrow v \quad \text{in} \quad L^q(\rn), \quad \forall q\in[2,2^*),
\end{equation}
where we let
\[
\wt{v}_{n_k}(x):=v_{n_k}(x-y_k). 
\]
In particular, the sequence $\{\wt{v}_{n_k}\}$
is compact in $L^2(\rn)$ and, in view of \eqref{a}, $\|v\|_{L^2}=M_\mathrm{gs}$. We shall
also suppose, without loss of generality, that $\wt{v}_{n_k}\wto v$ weakly in $H^1(\rn)$.

\medskip
\noindent
{\em Step~2: Localization.}
We now prove by contradiction that the sequence of translations $\{y_k\}$ is bounded in $\rn$. 
Let us assume that there exists a subsequence, denoted again by $\{y_k\}$, 
such that $|y_k|\rightarrow\infty$ as $k\rightarrow\infty$.
We will prove that, in this case, $\liminf_{k\rightarrow\infty}E(v_{n_k})>0$, which contradicts \eqref{a}.

We start by showing that
\begin{equation}\label{zerolimit}
\lim_{k\to\infty} \intrn \frac{|v_{n_k}|^2}{|x|^2}dx = 0.
\end{equation}
To prove \eqref{zerolimit}, we split the integral as
\[
\intrn \frac{|v_{n_k}|^2}{|x|^2}dx =
\underbrace{\int_{|x|<R}\frac{|v_{n_k}|^2}{|x|^2}dx}_{\mathrm{I}_k}
+\underbrace{\int_{|x|\ge R}\frac{|v_{n_k}|^2}{|x|^2}dx}_{\mathrm{II}_k},
\]
for some $R>0$.
We first observe that, by the boundedness of $\{v_{n_k}\}$ in $L^2(\rn)$,
there exists $C\ge0$ such that
\[
\mathrm{II}_k\le R^{-2}\int_{\rn}|v_{n_k}(x)|^2dx\le CR^{-2}.
\] 
By choosing $R>0$ large, we can thus make $\mathrm{II}_k$ as small as we want, uniformly in $k$. 

Next, we fix $R>0$ arbitrary and prove that $\lim_{k\to\infty}\mathrm{I}_k=0$. 
This will complete the proof of \eqref{zerolimit}. 
We argue by contradiction and suppose there exist $\delta>0$ 
and a subsequence of $\{v_{n_k}\}$ (still denoted $\{v_{n_k}\}$) such that
\begin{equation*}\label{deltaR}
\liminf_{k\rightarrow\infty}\int_{|x|<R}\frac{|v_{n_k}|^2}{|x|^2}dx\ge\delta.
\end{equation*}
Note that this inequality will still hold if we increase the value of $R$ or pass to a further subsequence.
Let $a>0$ and $\ffi\in C_0^\infty([0,\infty))$ be such that $\ffi(r)=1$ for $0\le r \le R$,
$\ffi(r)=0$ for $r\ge R+a$, $0\le\ffi\le1$ and $\|\ffi'\|_{L^\infty}\le 1/a$. 
Then define
\[
u_k(x)=\ffi(|x|) v_{n_k}(x) \quad\text{and}\quad w_k(x)=(1-\ffi(|x|))v_{n_k}(x).
\]
We clearly have $|u_k|,|w_k|\le|v_{n_k}|$, and $\{u_k\},\{w_k\}\subset H^1(\rn)$ with  
\[
|\nabla u_k|^2 \le 2\left((\ffi')^2|v_{n_k}|^2+\ffi^2|\nabla v_{n_k}|^2\right)
\le 2\left(a^{-2}|v_{n_k}|^2+|\nabla v_{n_k}|^2\right)
\]
and
\[
|\nabla w_k|^2 \le 2\left((\ffi')^2|v_{n_k}|^2+(1-\ffi)^2|\nabla v_{n_k}|^2\right)
\le 2\left(a^{-2}|v_{n_k}|^2+|\nabla v_{n_k}|^2\right).
\]
Furthermore,
\begin{equation*}\label{deltaRuk}
\liminf_{k\rightarrow\infty}\intrn\frac{|u_k|^2}{|x|^2}dx\ge
\liminf_{k\rightarrow\infty}\int_{|x|<R}\frac{|v_{n_k}|^2}{|x|^2}dx\ge\delta,
\end{equation*}
and \eqref{hardy} yields
\begin{equation}\label{crucial}
\liminf_{k\rightarrow\infty}H(u_k)=\liminf_{k\rightarrow\infty}
\left(\intrn |\nabla u_k|^2dx-c\intrn\frac{|u_k|^2}{|x|^2}dx\right) \ge (c_*-c)\delta>0.
\end{equation}
Using the sequences $\{u_k\},\{w_k\}$, we can now rewrite $H(v_{n_k})$ as
\begin{align*}
H(v_{n_k})
&=H(u_k)+H(w_k) \\
&\phantom{=}+\int_{R<|x|<R+a} 
\Big[|\nabla v_{n_k}|^2-|\nabla u_k|^2-|\nabla w_k|^2
-\frac{c}{|x|^2}\left(|v_{n_k}|^2-|u_k|^2-|w_k|^2\right)\Big]dx.
\end{align*}
We will show that there exist $R>0$ as large as we want and $a>0$
such that, up to a further subsequence,
\begin{equation}\label{est0}
\limsup_{k\to\infty}\left|\int_{R<|x|<R+a} 
\Big[|\nabla v_{n_k}|^2-|\nabla u_k|^2-|\nabla w_k|^2
-\frac{c}{|x|^2}\left(|v_{n_k}|^2-|u_k|^2-|w_k|^2\right)\Big]dx\right|\le\frac{(c_*-c)\delta}{2}.
\end{equation}
Using \eqref{crucial}, this will imply
\begin{equation}\label{e1}
\liminf_{k\rightarrow\infty}H(v_{n_k})
\ge\liminf_{k\rightarrow\infty}H(w_k)+\frac{(c_*-c)\delta}{2}>\liminf_{k\rightarrow\infty}H(w_k),
\end{equation}
which will lead to the desired contradiction.
To prove \eqref{est0}, we use the above properties of $u_k$ and $w_k$ to observe that
\begin{multline}\label{est}
\left|\int_{R<|x|<R+a}\Big[|\nabla v_{n_k}|^2-|\nabla u_k|^2-|\nabla w_k|^2
-\frac{c}{|x|^2}\left(|v_{n_k}|^2-|u_k|^2-|w_k|^2\right)\Big]dx\right| \\
\le 5\int_{R<|x|<R+a}|\nabla v_{n_k}|^2dx+4a^{-2}\int_{R<|x|<R+a}|v_{n_k}|^2dx
+\frac{3c}{R^2}\int_{R<|x|<R+a}|v_{n_k}|^2dx.
\end{multline}
First, by choosing $R>0$ sufficiently large, we will have
\begin{equation}\label{term3}
\frac{3c}{R^2}\int_{R<|x|<R+a}|v_{n_k}|^2dx  
\le \frac{3c}{R^2} \|v_{n_k}\|_{L^2}^2 \le \frac{(c_*-c)\delta}{4} \quad\text{for all} \ k\in\N.
\end{equation}
The second term in the right-hand side of \eqref{est} is handled using Step 1 with $|y_k|\to\infty$: 
for any fixed $R,a>0$, we have that
\begin{equation}\label{term2}
\lim_{k\to\infty} \int_{R<|x|<R+a}|v_{n_k}|^2dx=0.
\end{equation}
As for the first term, we claim that we can choose $R>0$ as large as we want and $a>0$ such that,
up to a subsequence,
\begin{equation}\label{term1}
5\int_{R<|x|<R+a}|\nabla v_{n_k}|^2dx\le \frac{(c_*-c)\delta}{4} \quad\text{for all} \ k\in\N.
\end{equation}
Estimates \eqref{term3}--\eqref{term1} prove \eqref{est0}, and hence \eqref{e1}.
If the above claim is not true, then using a diagonal argument we can construct a 
subsequence $\{v_{n_{k_l}}\}$, and a sequence of pairwise disjoint annuli
\[
A_j=\{x\in\rn : R_j<|x|<R_j+a_j\} \quad\text{with}\quad R_{j+1}>R_j+a_j,
\]
such that, for all $j\in\{1,\dots,l\}$,
\[
\int_{A_j}|\nabla v_{n_{k_l}}|^2dx\ge \bar \delta:= \frac{(c_*-c)\delta}{20}>0 \quad\text{for all} \ n_{k_l}\in\N.
\]
But then
\[
\intrn|\nabla v_{n_{k_l}}|^2dx \ge \sum_{j\in\N}\int_{A_j}|\nabla v_{n_{k_l}}|^2dx
\ge \sum_{j=1}^l\int_{A_j}|\nabla v_{n_{k_l}}|^2dx \ge l\bar\delta,
\]
and it follows that
\[
\lim_{l\to\infty}\intrn|\nabla v_{n_{k_l}}|^2dx=+\infty.
\]
In view of \eqref{hardy2} this contradicts \eqref{a}, and completes the proof of \eqref{e1}.

We will now combine \eqref{e1} with the Gagliardo--Nirenberg inequality \eqref{GN} and
the assumption on the energy in \eqref{a} to reach a contradiction.
Using Step 1 with $|y_k|\to\infty$, we have that
$u_k\to0$ in $L^{p+1}(\rn)$.
Hence, by \eqref{a},
\begin{align*}
\limsup_{k\rightarrow\infty}H(v_{n_k})\le\frac{2}{p+1}\liminf_{k\rightarrow\infty}\int_{\rn}|v_{n_k}|^{p+1}dx
=\frac{2}{p+1}\liminf_{k\rightarrow\infty}\int_{\rn}|w_k|^{p+1}dx,
\end{align*}
and so, by \eqref{e1},
\begin{align} \label{e4}
\liminf_{k\rightarrow\infty}H(w_k)<\frac{2}{p+1}\liminf_{k\rightarrow\infty}\int_{\rn}|w_k|^{p+1}dx.
\end{align}
Using again Step 1 with $|y_k|\to\infty$, we also have that $u_k\to0$ in $L^2(\rn)$,
which implies
\[
M_\mathrm{gs}=\lim_{k\rightarrow\infty}\|v_{n_k}\|_{L^2}
=\lim_{k\rightarrow\infty}\|w_k\|_{L^2}.
\]
Therefore, \eqref{GN} applied to $w_k$ yields 
\[
\frac{2}{p+1}\liminf_{k\rightarrow\infty}\int_{\rn}|w_k|^{p+1}dx
\le\liminf_{k\rightarrow\infty}\left(\frac{\|w_k\|_{L^2}}{M_\mathrm{gs}}\right)^{\frac4N}H(w_k)
= \liminf_{k\rightarrow\infty}H(w_k),
\]
which, together with \eqref{e4}, leads to the contradiction
\[
\liminf_{k\rightarrow\infty}H(w_k)<\liminf_{k\rightarrow\infty}H(w_k).
\]
This shows that $\lim_{k\to\infty}\mathrm{I}_k=0$ and completes the proof of \eqref{zerolimit}.

We now have
\begin{align*}
\liminf_{k\to\infty} E(v_{n_k})
&=\liminf_{k\to\infty} \frac12 \|\nabla v_{n_k}\|_{L^2}^2
-\lim_{k\to\infty} \frac{1}{p+1} \| v_{n_k}\|_{L^{p+1}}^{p+1}\\
&=\liminf_{k\to\infty} \frac12 \|\nabla \wt{v}_{n_k}\|_{L^2}^2
-\lim_{k\to\infty} \frac{1}{p+1} \| \wt{v}_{n_k}\|_{L^{p+1}}^{p+1}\\
&\ge \frac12 \|\nabla v\|_{L^2}^2 - \frac{1}{p+1} \|v\|_{L^{p+1}}^{p+1}>E(v)\ge 0,
\end{align*}
where the last inequality follows from \eqref{posE} with $\|v\|_{L^2}=M_\mathrm{gs}$.
This contradicts \eqref{a} and shows that the sequence $\{y_k\}$ must indeed be bounded in $\rn$.

\medskip
\noindent
{\em Step~3: Conclusion.} Suppose, without loss of generality, that $y_k\to y^*\in\rn$.
Then
\[
v_{n_k}\to v^*=v(\cdot+y^*) \quad \text{in} \quad L^q(\rn), \quad \forall q\in[2,2^*),
\]
and $v_{n_k}\wto v^*$ weakly in $H^1(\rn)$.
Since the Hardy functional $H$ is weakly lower semi-continuous \cite{montefusco},
and $p+1\in(2,2^*)$, it now follows by \eqref{posE} that
\[
\begin{split}
0\le E(v^*)=\frac{1}{2}H(v^*)-\frac{1}{p+1}\int_{\rn}|v^*|^{p+1}dx
&\le \frac{1}{2}\limsup_{k\rightarrow\infty}H(v_{n_k})-\frac{1}{p+1}\int_{\rn}|v^*|^{p+1}dx\\
&\phantom{\le}=\limsup_{k\rightarrow\infty}E(v_{n_k})\le 0,
\end{split}
\]
so in fact
\begin{equation}\label{convergence}
E(v^*)=0 \quad\text{and}\quad H(v^*)=\lim_{k\rightarrow\infty}H(v_{n_k}).
\end{equation}
Since $\|v^*\|_{L^2}=M_\mathrm{gs}$ and $E(v^*)=0$, Proposition \ref{charact.prop} yields
$v^*=e^{i\gamma_0}Q_{\lambda_0}$ for some $\gamma_0\in\real$, $\lambda_0>0$, 
and some $Q\in \mathcal{G}$. 
Finally, by \eqref{convergence} and \eqref{hardy2}, 
we also have that $\|v_{n_k}\|_{H^1}\rightarrow \|v^*\|_{H^1}$.
Hence $\{v_{n_k}\}$ converges strongly to $v^*$ in $H^1(\rn)$, which completes the proof.
\end{proof}


\subsection{Virial identities}

We now prove virial identities for \eqref{inls}. 
As they can be useful in more general contexts, we state them for any $1<p<1+\frac{4}{N-2}$.
We let
\[
\Sigma :=\{u \in H^1(\rn) : xu \in L^2(\rn) \},
\]
and for $u(t)\in\Sigma$, we introduce
\[
\Gamma(t):=\int_{\rn} |x|^2|u(t,x)|^2dx.
\]

\begin{lemma}\label{virial.lemma}
Let $u$ be a solution of \eqref{inls} on $[0,T)$, such that $u(t)\in \Sigma$ for all $t\in [0,T)$. 
Then for all $t\in[0,T)$ the following identities hold:
\begin{equation} \label{virialone}
\Gamma'(t)=4\im\int_{\rn} \overline{u}(t,x)(\nabla u(t,x)\cdot x)dx
\end{equation}
and
\begin{equation}\label{virialtwo}
\Gamma''(t)=16E(u)+\frac{4}{p+1}(N-Np+4)\int_{\rn} |u(t,x)|^{p+1}dx.
\end{equation}
\end{lemma}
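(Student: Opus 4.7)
The plan is to derive both identities by formal differentiation under the integral sign, substituting the PDE $\partial_t u = i[\Delta u + c|x|^{-2}u + |u|^{p-1}u]$ for the time derivatives, and then performing integration by parts. Since $u(t)$ is only assumed to lie in $H^1(\rn)\cap\Sigma$, these manipulations are not \emph{a priori} justified, but we would legitimise them by the standard regularisation argument: approximate $u_0$ by smoother, rapidly decaying data (say in $H^2(\rn)\cap\Sigma$, supported away from the origin, so that every boundary term makes classical sense), derive the identities for the resulting smooth solutions, and pass to the limit using continuous dependence of the flow in $H^1\cap\Sigma$.

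For~\eqref{virialone}, I would compute
\[
\Gamma'(t)=2\re\intrn|x|^2\overline{u}\,\partial_t u\diff x=2\re\Big\{i\intrn|x|^2\overline{u}\bigl[\Delta u+c|x|^{-2}u+|u|^{p-1}u\bigr]\diff x\Big\}.
\]
The contributions of $c|x|^{-2}u$ and $|u|^{p-1}u$ are $i$ times the real integrals $c\intrn|u|^2\diff x$ and $\intrn|x|^2|u|^{p+1}\diff x$, and hence disappear after taking the real part. For the remaining Laplacian term, one integration by parts gives
\[
\intrn|x|^2\overline{u}\Delta u\diff x=-\intrn|x|^2|\nabla u|^2\diff x-2\intrn\overline{u}(x\cdot\nabla u)\diff x;
\]
the first summand is real and drops out, and the second yields precisely $4\im\intrn\overline{u}(x\cdot\nabla u)\diff x$, as required.

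For~\eqref{virialtwo}, I would differentiate \eqref{virialone} in time and again substitute the PDE. The resulting Pohozaev-type computation decomposes into three contributions: the Laplacian part gives the classical Morawetz term $8\intrn|\nabla u|^2\diff x$; the inverse-square potential, via its homogeneity of degree $-2$ (so $x\cdot\nabla|x|^{-2}=-2|x|^{-2}$), contributes $-8c\intrn|x|^{-2}|u|^2\diff x$; and the nonlinearity, by means of the identity $\overline{u}\nabla(|u|^{p-1}u)+(\nabla\overline{u})|u|^{p-1}u=\nabla|u|^{p+1}$ combined with $\intrn x\cdot\nabla|u|^{p+1}\diff x=-N\intrn|u|^{p+1}\diff x$, contributes $-\frac{4N(p-1)}{p+1}\intrn|u|^{p+1}\diff x$. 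Summing these and substituting $H(u)=2E(u)+\frac{2}{p+1}\intrn|u|^{p+1}\diff x$ into $\Gamma''(t)=8H(u)-\frac{4N(p-1)}{p+1}\intrn|u|^{p+1}\diff x$ produces precisely \eqref{virialtwo}.

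The main obstacle is the rigorous justification of the integrations by parts, because the weights $|x|^2$, $x$ and $|x|^{-2}x$ that appear in the computation are singular either at the origin or at infinity. Hardy's inequality \eqref{hardy} is what absorbs the $|x|^{-2}$ factor into the $H^1$ norm, and the hypothesis $u(t)\in\Sigma$ controls the growth of the other weights at infinity; once this is handled for the regularised approximations, the density argument transfers the identities to all admissible $u$.
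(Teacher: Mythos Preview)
Your proposal is correct and follows essentially the same route as the paper: both differentiate $\Gamma$ under the integral, substitute the equation, and integrate by parts, with the formal manipulations justified by a regularisation argument stated at the outset. The paper carries out the $\Gamma''$ computation in more explicit detail (splitting into terms $A$, $B$, $C$ and a separate $-4N\,\im\int\partial_t u\,\bar u$ contribution), but the decomposition into Laplacian, potential, and nonlinear pieces and the final recombination into $16E(u)+\frac{4}{p+1}(N-Np+4)\|u\|_{L^{p+1}}^{p+1}$ are the same.
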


\begin{proof} By regularization we may assume that 
$u$ is smooth enough for the following calculation. Since $u$ satisfies (\ref{inls}), we find
\[
\begin{split}
\Gamma'(t)&=2\re\int|x|^2\bar{u}\partial_tu=2\re\int|x|^2\bar{u}i(\Delta u+\frac{c}{|x|^2}u+|u|^{p-1}u)\\
&=-2\im\int|x|^2(\bar{u}\Delta u+\frac{c}{|x|^2}|u|^2+|u|^{p+1})=-2\im\int|x|^2\bar{u}\Delta u.
\end{split}
\]
Integrating by parts and using that $\nabla |x|^2=2x$, we obtain
\[
\Gamma'(t)=2\im\int\nabla u(\bar{u}\nabla|x|^2+|x|^2\nabla\bar{u})=4\im\int\bar{u}(\nabla u\cdot x).
\]
Using again integration by parts, we get
\[
\begin{split}
\Gamma''(t)&=4\im\int\partial_t\bar{u}(x\cdot\nabla u)+\bar{u}(x\cdot\nabla\partial_tu)
=4\im\int\partial_tu[-x\cdot\nabla\bar{u}-\nabla\cdot(\bar{u}x)]\\
&=4\im\int\partial_tu[-2x\cdot\nabla\bar{u}-\bar{u}\nabla\cdot x]
=-8\im\int\partial_tu(x\cdot\nabla\bar{u})-4N\im\int\partial_tu\bar{u}.
\end{split}
\]
We use (\ref{inls}) to compute the last two terms. We first find

\begin{align}
-4N\im\int\partial_tu\bar{u}&=-4N\re\int\bar{u}(\Delta u+\frac{c}{|x|^2}u+|u|^{p-1}u)\notag
\\
&=4N\int{|\nabla u|^2}-4Nc\int\frac{|u|^2}{|x|^2}-4N\int|u|^{p+1}. \label{viriala}
\end{align}
Similarly,
\[
\begin{split}
-8\im\int\partial_tu(x\cdot \nabla \bar{u})&=
-8\re\int(x\cdot\nabla\bar{u})(\Delta u+\frac{c}{|x|^2}u+|u|^{p-1}u)
\\
&=-8\re\int\Delta u(x\cdot\nabla\bar{u})-8c\int|x|^{-2}x\cdot\re(u\nabla\bar{u})
-8\int x\cdot|u|^{p-1}\re(u\nabla\bar{u})
\\
&\equiv A+B+C.
\end{split}
\]
Since $\partial_{x_k}(|\partial_{x_j}u|^2) = 2\re(\partial_{x_j}u\partial_{x_j}\partial_{x_k}\bar u)$
for $1\le j,k\le N$, we find
\begin{align}
A &= -8\sum_{j,k} \re\int \partial_{x_j}^2u\, x_k\partial_{x_k}\bar u
= 8\sum_{j,k} \re\int \partial_{x_j}u (\delta_{j,k}\partial_{x_k}\bar u + x_k\partial_{x_j}\partial_{x_k}\bar u) \notag \\
&= 8\sum_j \int |\partial_{x_j}u|^2 +4\sum_{j,k} \int x_k\partial_{x_k}(|\partial_{x_j}u|^2) \notag \\
&= 8\sum_j \int |\partial_{x_j}u|^2 -4\sum_{j,k} \int |\partial_{x_j}u|^2 \label{virialpart3}
= (8-4N)\int |\nabla u|^2, 
\end{align}
where $\delta_{j,k}=1$ for $j=k$ and $0$ otherwise.

Since $\nabla(|u|^{p+1})=(p+1)|u|^{p-1}\re(u\nabla\bar{u})$, we obtain by integration by parts
\begin{align}
C=\frac{8}{p+1}\int|u|^{p+1}\nabla\cdot x=\frac{8N}{p+1}\int|u|^{p+1}.\label{virialc}
\end{align}
Similarly, we get
\begin{align}
B&=-8c\int \nabla(|u|^2)\cdot|x|^{-2}x=+4c\int \nabla(|x|^{-2}x)|u|^2=+4c\int|u|^2(\nabla|x|^{-2}\cdot x+|x|^{-2}\nabla \cdot x) \notag
\\
&\phantom{=}+4c\int|u|^2(-2|x|^{-2}+N|x|^{-2}).\label{viriald}
\end{align}
Finally, putting together (\ref{viriala}), (\ref{virialpart3}), (\ref{virialc}) and (\ref{viriald}), we obtain
\[
\begin{split}
\Gamma''(t)&=8\int|\nabla u|^2-8c\int\frac{|u|^2}{|x|^2}+\frac{4}{p+1}(N-Np)\int|u|^{p+1}
\\
&=16E(u)+\frac{4}{p+1}(N-Np+4)\int|u|^{p+1},
\end{split}
\]
which concludes the proof. 
\end{proof}

\begin{remark} In the $L^2$ critical case, $p=1+\frac{4}{N}$, 
(\ref{virialtwo}) reduces to
\begin{equation}\label{virialthree}
\Gamma''(t)=16E(u_0).
\end{equation}
\end{remark}


\subsection{Classification}
To prove our main result, we shall deduce information about $u(t)$ for $t<T$ from the
blow-up at $t=T$. In particular, using a result of Banica \cite{banica}, we will show
that $u(t)\in\Sigma$ for all $t<T$. This will allow us to apply the virial identities, which will
lead to the conclusion.

For $u\in H^1(\rn)$, $\theta \in C_0^\infty(\rn,\R)$ and $s\in \R$, 
we have that $\nabla(ue^{is\theta})=(\nabla u + is\nabla\theta)e^{is\theta}$, and so
\[
|\nabla(ue^{is\theta})|^2=|\nabla u|^2+2s\nabla\theta\cdot\im(\bar{u}\nabla u)+s^2|\nabla\theta|^2|u|^2.
\]
An easy calculation then yields
\begin{equation}\label{valeria}
E(ue^{is\theta})=E(u)+s\int_{\rn}\nabla \theta\cdot \im(\overline{u}\nabla u)dx 
+ \frac{s^2}{2}\int_{\rn}|\nabla\theta|^2|u|^2dx.
\end{equation}

\begin{lemma}[Lemma~2.1 in \cite{banica}]\label{banica.lem}
Let $u\in H^1(\rn)$ such that $\|u\|_{L^2}=\|Q\|_{L^2}$. Then for all $\theta \in C_0^\infty(\rn)$ we have that
\[
\left|\int_{\rn} \nabla \theta\cdot Im(\overline{u}\nabla u)dx\right|
\le \sqrt{2E(u)}\left(\int_{\rn}|\nabla \theta|^2|u|^2\right)^{1/2}.
\]
\end{lemma}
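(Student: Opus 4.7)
The plan is to exploit the identity \eqref{valeria}, which expresses $E(ue^{is\theta})$ as a quadratic polynomial in the real parameter $s$, and combine it with the sharp energy inequality \eqref{posE} at the mass threshold. The key observation is that multiplication by the unimodular factor $e^{is\theta}$ preserves both $\|\cdot\|_{L^2}$ and the Hardy term $\int |u|^2/|x|^2\,dx$, so $\|ue^{is\theta}\|_{L^2}=\|u\|_{L^2}=M_{\mathrm{gs}}$.

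Concretely, I would proceed as follows. First, apply \eqref{posE} (a consequence of the sharp Gagliardo--Nirenberg inequality from Corollary~\ref{GN.cor}) to the function $ue^{is\theta}$: since its mass equals $M_{\mathrm{gs}}$, the bracket in \eqref{posE} vanishes, yielding
\[
E(ue^{is\theta})\ge 0 \quad\text{for every } s\in\R.
\]
Then substitute \eqref{valeria}, which expresses the left-hand side as
\[
E(ue^{is\theta}) = \frac{s^2}{2}\!\int_{\rn}|\nabla\theta|^2|u|^2\,dx + s\!\int_{\rn}\nabla\theta\cdot\im(\overline{u}\nabla u)\,dx + E(u).
\]
This is a quadratic polynomial in $s$ with non-negative leading coefficient that is non-negative for all $s\in\R$; consequently its discriminant must be non-positive:
\[
\left(\int_{\rn}\nabla\theta\cdot\im(\overline{u}\nabla u)\,dx\right)^{\!2}
\le 2E(u)\int_{\rn}|\nabla\theta|^2|u|^2\,dx.
\]
Taking square roots gives exactly the claimed Cauchy--Schwarz-type inequality.

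There is essentially no serious obstacle here: the only point worth verifying carefully is that \eqref{valeria} remains valid in the present setting (the Hardy term and the $L^{p+1}$ term are unaffected by the phase twist, so the computation is identical to the free case) and that \eqref{posE} applies to $ue^{is\theta}\in H^1(\rn)$, which is automatic since $\theta\in C_0^\infty(\rn)$ ensures $e^{is\theta}\in W^{1,\infty}(\rn)$ and hence $ue^{is\theta}\in H^1(\rn)$. Note moreover that the hypothesis $\|u\|_{L^2}=M_{\mathrm{gs}}$ implicitly forces $E(u)\ge 0$ (take $s=0$), so the square root $\sqrt{2E(u)}$ on the right-hand side is well defined.
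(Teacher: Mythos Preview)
Your argument is correct and is precisely the standard proof of this inequality. Note that the paper does not actually prove this lemma: it is quoted from \cite{banica} (Lemma~2.1 there) without proof. Your proof is the same as Banica's original argument, transported to the present setting via the identity \eqref{valeria} and the sharp lower bound \eqref{posE} for the energy functional with the Hardy term. The only (trivial) case you might make explicit is when $\int_{\rn}|\nabla\theta|^2|u|^2\,dx=0$, where the quadratic degenerates; but then $u\nabla\theta=0$ a.e., which forces the left-hand side to vanish as well, so the inequality holds trivially.
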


We can now prove our main result.

\begin{proofof}~{\it Theorem~\ref{main.thm}}.
Let $u$ be a solution of \eqref{inls} such that $\|u\|_{L^2}=M_\mathrm{gs}$ 
and $\lim_{t\uparrow T} H(u) =+\infty$.

\medskip
\noindent
{\em Step~1.} 
Let $\{t_n\}_{n\in\nat} \subset \real$ be a sequence of times such that $t_n\uparrow T$ 
as $n\rightarrow\infty$. We set
\[
u_n=u(t_n), \quad \lambda_n^2=H(u_n), \quad v_n(x)=\lambda_n^{-N/2}u_n(\lambda_n^{-1}x).
\]
We note that $\lambda_n\rightarrow\infty$ as $n\rightarrow\infty$, 
and $\|v_n\|_{L^2}=\|u_n\|_{L^2}=\|u_0\|_{L^2}=M_\mathrm{gs}$ by the $L^2$ scaling.
Furthermore,
\[
\begin{split}
H(v_n)=\lambda_n^{-2}H(u_n)=1.
\end{split}
\]
On the other hand, by conservation of the energy,
\[
\begin{split}
E(v_n)&=\frac12\lambda_n^{-2}H(u_n)-\frac{1}{p+1}\lambda_n^{-2}\int_{\rn} |u_n(y)|^{p+1}dy\\
&=\lambda_n^{-2} E(u_n)=\lambda_n^{-2}E(u_0)\to 0 \quad \text{as} \ n\to\infty.
\end{split}
\]
Hence, by Proposition \ref{compact.prop}, there exist $Q\in\mathcal{G}$, $\gamma_0\in\real$ 
and $\lambda_0>0$ such that, up to a subsequence,
\begin{equation}\label{b}
\lim_{n\rightarrow\infty} \|v_n-e^{i\gamma_0}Q_{\lambda_0}\|_{H^1}=0,
\end{equation}
where $Q_{\lambda_0}(x)=\lambda_0^{N/2}Q(\lambda_0 x)$.

\medskip
\noindent
{\em Step~2.} 
We now prove that $u_n$ concentrates all of its mass at $x=0$ as $n\rightarrow\infty$. 
We show that, in the sense of distributions,
\[
|u_n|^2\rightarrow M_\mathrm{gs}^2\delta_0.
\]
Indeed, for any $\varphi\in C^\infty_0(\rn)$, using the change of variables $y=\lambda_n^{-1}x$
we find that
\[
\begin{split}
\int_{\rn}|u_n(y)|^2\varphi(y)dy
&=\int_{\rn}|u_n(\lambda_n^{-1}x)|^2\varphi(\lambda^{-1}_nx)\lambda_n^{-N}dx
=\int_{\rn}|v_n(x)|^2\varphi(\lambda_n^{-1}x)dx\\
&=\int_{\rn}(|v_n(x)|^2-|Q(x)|^2)\varphi(\lambda_n^{-1}x)dx+\int_{\rn} |Q(x)|^2\varphi(0)dx\\
&\phantom{\le} \ +\int_{\rn}|Q(x)|^2(\varphi(\lambda_n^{-1}x)-\varphi(0))dx.
\end{split}
\]
Thus, we obtain
\[
\begin{split}
\Big|\int_{\rn}|u_n(y)|^2\varphi(y)dy -M_\mathrm{gs}^2\varphi(0)\Big|
&\le \|\varphi\|_{L^\infty}\int_{\rn}||v_n(x)|^2-|Q(x)|^2|dx\\
&\phantom{\le} \ +\int_{\rn}|Q(x)|^2|\varphi(\lambda_n^{-1}x)-\varphi(0)|dx.
\end{split}
\]
We conclude this step by noticing that $|v_n|^2$ converges strongly to 
$|Q|^2$ in $L^1(\rn)$ from (\ref{b}), so the first integral vanishes as $n\rightarrow\infty$. 
Since $\lambda_n\rightarrow\infty$ the second integral also converges to zero 
by the dominated convergence theorem. Hence, we obtain
\begin{equation} \label{c}
\int_{\rn}|u_n(y)|^2\varphi(y)dy\rightarrow M_\mathrm{gs}^2\varphi(0).
\end{equation}

\medskip
\noindent
{\em Step~3.} 
We now show that $u(t)\in\Sigma$ for all $t\in[0,T)$. 
Let $\phi\in C^\infty_0(\rn)$ be a radial and non-negative function such that 
$\phi(x)=|x|^2$ for $|x|\le 1$. It is easy to see that there exists a constant $C>0$ such that
\[
|\nabla \phi (x) |^2\le C\phi(x)
\]
for all $x\in\rn$.
Indeed, since $\phi$ is radially symmetric we may choose $f\in C(\real,\real_+) $, such that $\phi(|x|)=f(|x|)$. By Taylor's formula, for all $r\in\real$ and $h\in\real$ there exists $y\in[r,r+h]$, such that
\[
 0\le f(r+h)=f(r)+f'(r)h+\frac{f''(y)}{2}h^2\le f(r)+f'(r)h+C'h^2,
\]
where $C'=1+\max_{r\in\real}\frac{|f'(r)|}{2}>0$. Since the right-hand side is a positive    
quadratic polynomial, we must have that $|f'(r)|^2-4C'f(r)\le 0$, thus $|f'(r)|^2\le Cf(r)$, where $C=4C'>0$.

For all $R>0$, we define $\phi_R(x)=R^2\phi(x/R)$, and for all $t\in[0,T)$,
\[
\Gamma_R(t)=\int_{\rn}\phi_R(x)|u(t,x)|^2dx.
\]
A direct calculation, similar to the proof of Lemma~\ref{virial.lemma}, yields that
\[
\Gamma_R'(t)=2\text{Re}\int\phi_R\bar{u}\partial_tu 
=-2\text{Im}\int\phi_R\bar{u}(\Delta u+\frac{c}{|x|^2}u+|u|^{p-1}u)
=2\int\nabla\phi_R\cdot\text{Im}(\bar{u}\nabla u).
\]
Since $\|u\|_{L^2}=\|Q\|_{L^2}$, we can apply Lemma \ref{banica.lem} 
and the inequality $|\nabla\phi_R|^2\le C\phi_R$, to get 
\[
|\Gamma'_R(t)|\le 2\sqrt{2E(u)}\Big(\int|\nabla\phi_R|^2|u|^2\Big)^{1/2}
\le C\sqrt{E(u_0)}\sqrt{\Gamma_R(t)}.
\]
By integrating between a fixed $t\in[0,T)$ and $t_n$, we obtain,
\[
|\sqrt{\Gamma_R(t)}-\sqrt{\Gamma_R(t_n)}|\le C|t-t_n|.
\]
Applying (\ref{c}) in Step 2, we get
\[
\Gamma_R(t_n)=\int_{\rn} |u_n(x)|^2\phi_R(x)dx\rightarrow M_\mathrm{gs}^2\phi_R(0)=0.
\]
Thus, letting $n\to\infty$ in the last inequality, we obtain, for all $t\in [0,T)$ and all $R>0$,
\[
\Gamma_R(t) \le C(T-t)^2.
\]
Since the right-hand side of the last expression is independent of $R$,
letting $R\to\infty$ yields, for all $t\in [0,T)$,
\[
u(t)\in\Sigma \quad\text{and}\quad 0\le \Gamma(t) \le C(T-t)^2.
\]
From this estimate, we can extend by continuity $\Gamma(t)$ at $t=T$ by setting $\Gamma(T)=0$,
from which we also obtain $\Gamma'(T)=0$.
Moreover, since $u(t)\in\Sigma$ and $u$ is a solution of~\eqref{inls}, 
we may apply Lemma~\ref{virial.lemma}, and by (\ref{virialthree}) 
we obtain $\Gamma''(t)=16E(u_0)$, which finally gives, for all $t\in [0,T)$,
\[
\Gamma(t) = 8E(u_0)(T-t)^2.
\]
Letting $t=0$, we find, using identity (\ref{virialone})  
\[
\Gamma(0)=\int |x|^2|u_0|^2= 8E(u_0)T^2
\quad \text{and}\quad \Gamma'(0)= 4\int x\cdot\im(\overline{u_0}\nabla u_0) = -16E(u_0)T.
\]

\medskip
\noindent
{\em Step 4.} 
We apply identity \eqref{valeria} with $u_0$, $s=\frac{1}{2T}$, and $\theta(x)=\frac{|x|^2}{2}$ to get 
\[
\begin{split}
E(u_0e^{i\frac{|x|^2}{4T}})&=E(u_0)+\frac{1}{2T}\int x\cdot\text{Im}(\bar{u_0}\nabla u_0)
+\frac{1}{8T^2}\int|x|^2|u_0|^2\\
&=E(u_0)+\frac{1}{2T}(-4E(u_0))+\frac{1}{8T^2}(8E(u_0)T^2)=0.
\end{split}
\]
Note that this calculation justifies, {\em a posteriori}, the application of~\eqref{valeria}
with $\theta(x)=\frac{|x|^2}{2}\not\in C_0^{\infty}(\rn)$.
Since $\|u_0e^{i\frac{|x|^2}{4T}}\|_{L^2}=M_\mathrm{gs}$ and $E(u_0e^{i\frac{|x|^2}{4T}})=0$, 
we can deduce from Proposition \ref{charact.prop} that there exist $\lambda_1>0$, 
$\gamma_1\in\real$, and $\wt{Q}\in\mathcal{G}$ such that
\[
u_0(x)=e^{i\gamma_1}e^{-i\frac{|x|^2}{4T}}\lambda_1^{N/2}\wt{Q}(\lambda_1x).
\]
Finally, we use the pseudo-conformal transformation. We define $\tilde\lambda_0=\lambda_1T>0$ and 
$\tilde\gamma_0=\gamma_1-\lambda_1^2T\in\real$, and write $u_0$ as
\[
u_0(x)=e^{i\tilde\gamma_0}e^{i\frac{\tilde\lambda^2_0}{T}}
e^{-i\frac{|x|^2}{4T}}\Big(\frac{\tilde\lambda_0}{T}\Big)^{N/2}\wt{Q}\Big(\frac{\tilde\lambda_0x}{T}\Big).
\]
Thus, $u_0=S_{\wt{Q},T,\tilde\lambda_0,\tilde\gamma_0}(0)$, where 
$S_{\wt{Q},T,\tilde\lambda_0,\tilde\gamma_0}$ 
is defined by (\ref{S2.def}). By invoking uniqueness of the solution of (\ref{inls}), 
we find that $u(t)=S_{\wt{Q},T,\tilde\lambda_0,\tilde\gamma_0}(t)$ for all $t\in[0,T)$, 
which concludes the proof.  
\end{proofof}


\end{document}